\newtheorem{theo}{Theorem}[section]
\newtheorem{lem}[theo]{Lemma}
\newtheorem{prop}[theo]{Proposition}
\newtheorem{cor}[theo]{Corollary}
\newtheorem{rem}[theo]{Remark}
\newtheorem{defi}[theo]{Definition}
\newcommand{\T}{\mathbb{T}}
\newcommand{\N}{\mathbb{N}}
\newcommand{\R}{\mathbb{R}}
\newcommand{\dxdt}{\mathrm{d}x \mathrm{d}t}
\newcommand{\dx}{ \mathrm{d}x}
\newcommand{\dy}{\mathrm{d}y}
\newcommand{\dd}{\mathrm{d}}
\renewcommand{\epsilon}{\varepsilon}
\renewcommand{\phi}{\varphi}
\numberwithin{equation}{section}
\DeclareMathAlphabet{\mathcal}{OMS}{cmsy}{m}{n}
\begin{document}
\allowdisplaybreaks
	
	\title{On the Selection of Measure-Valued Solutions for the Isentropic Euler System}
	
	\author{Dennis Gallenm\"uller\footnotemark[1] \and Emil Wiedemann\footnotemark[1]}
	
	\date{}
	
	\maketitle
	
	\begin{abstract}
		
		Measure-valued solutions to fluid equations arise naturally, for instance as vanishing viscosity limits, yet exhibit non-uniqueness to a vast extent. In this paper, we show that some measure-valued solutions to the two-dimensional isentropic compressible Euler equations, although they are energy admissible, can be discarded as unphysical, as they do not arise as vanishing viscosity limits. In fact, these measure-valued solutions also do not arise from a sequence of weak solutions of the Euler equations, in contrast to the incompressible case. Such a phenomenon has already been observed by Chiodaroli, Feireisl, Kreml, and Wiedemann using an $\mathcal{A}$-free rigidity argument, but only for non-deterministic initial datum. We develop their rigidity result to the case of non-constant states and combine this with a compression wave solution evolving into infinitely many weak solutions, constructed by Chiodaroli, De Lellis, and Kreml. Hereby, we show that there exist infinitely many generalized measure-valued solutions to the two-dimensional isentropic Euler system with quadratic pressure law, which behave deterministically up to a certain time and which cannot be generated by weak solutions with bounded energy or by vanishing viscosity sequences.

		
	\end{abstract}
	
	\renewcommand{\thefootnote}{\fnsymbol{footnote}}
	
	\footnotetext[1]{Institut f\"ur Angewandte Analysis, Universit\"at Ulm, Helmholtzstra\ss e 18, 89081 Ulm, Germany. Email:
		emil.wiedemann@uni-ulm.de, dennis.gallenmueller@uni-ulm.de}

	\section{Introduction}
	
	Almost three centuries ago, Euler formulated the system of equations
	\begin{align}
		\begin{split}
			\partial_t\rho+\operatorname{div}_x(\rho u)&=0,\\
			\partial_t(\rho u)+\operatorname{div}_x(\rho u\otimes u)+\nabla_x p(\rho)&=0\label{eq:CEoriginal}
		\end{split}
	\end{align}
	with unknown density $\rho\geq 0$ and velocity $u$. For the corresponding Cauchy problem on $[0,T]\times \Omega$, say $\Omega$ is a smooth and bounded domain $\Omega\subset \R^d$, the additional equation $(\rho,u)(0,\cdot)=(\rho_0,u_0)$ is required to hold in an appropriate sense for some given $\rho_0,u_0$. These equations describe the conservation of mass and momentum of an isentropic compressible perfect fluid. The pressure $p(\rho)\geq 0$ is a pre-determined continuous function of the density and shall satisfy $p(0)=0$ and $p'(\rho)>0$. In many situations, the constitutive relation for the pressure is $p(\rho)=\rho^{\gamma}$ for some $\gamma>1$,\ e.g.\ if a perfect gas is studied. In the present paper, the case $\gamma=2$ will be of particular interest.\\
	In modern PDE theory the notion of weak (or distributional) solutions plays a central role, as actually observed fluid flows often possess low regularity. Especially for the compressible Euler system, weak solutions are interesting as they can be used to model the non-smooth behaviour of turbulence or to describe shock solutions which arise even classically. However, it has recently been discovered that weak solutions might not be unique even upon imposition of an entropy condition, for example cf.~\cite{DS10} and~\cite{Chi14}.\\
	Originally introduced by DiPerna and Majda~\cite{DM87} for the incompressible Euler system and further developed by Neustupa~\cite{Neu93} to the compressible Euler system, there is the even weaker notion of measure-valued solutions. This concept describes parametrised probability measures, also called Young measures, on the phase space $\R^+\times \R^d$ that solve the Euler equations in an average sense, see Definition~\ref{Def:MVS} and~\ref{Def:gMVS} below for details. It is important to note that it is not part of the notion of a measure-valued solution to be somehow obtained from a sequence of approximations (it is precisely the aim of the current contribution to show that not every measure-valued solution has this property). Weakly converging sequences of weak solutions might exhibit oscillations or concentrations in the nonlinear terms. While it suffices to deal with classical Young measures as long as concentration effects can be excluded (e.g.\ when the sequence is uniformly bounded), for fluid equations one usually requires generalized Young measures. We will be concerned with both notions throughout the present paper. 
	\\
	Although weak solutions already fail to be unique in general, the even weaker concept of measure-valued solutions has raised some recent interest, since quite surprising properties have been shown:
	\begin{itemize}
		\item For the incompressible and the compressible Euler system measure-valued solutions enjoy the weak-strong uniqueness property, cf.~\cite{BDS} and~\cite{GSW15}.
		\item There is numerical evidence that for vortex sheet initial data the numerical solution scheme does not converge to the stationary solution, cf.~\cite{FKMT}. This can be interpreted to confirm the usefulness of measure-valued solutions, since it is easier and more realistic to show convergence of numerical schemes to measure-valued solutions instead of weak solutions.
		\item In the incompressible regime every measure-valued solution can be generated by a sequence of weak solutions. This result by~\cite{SW12} leads to the conclusion that for the incompressible Euler system the notions of weak solutions and measure-valued solutions essentially coincide.
	\end{itemize}
	The latter property stands in marked contrast to the compressible Euler system. This has been observed by~\cite{CFKW}, where a measure-valued solution was constructed which cannot be generated by sequences of weak solutions. For that, a rigidity result in the spirit of compensated compactness for so-called $\mathcal{A}$-free sequences generalizing a well-known result of Ball and James, cf.~\cite{BJ87}, was used. The construction of~\cite{CFKW} is in such a way that the measure-valued solution is constant and consists of two separated Dirac measures. In particular, the solution from~\cite{CFKW} does not arise from deterministic initial data.\\
	For us, the question arose if this non-generable nature is only due to the non-deterministic nature of the data, or if this kind of solution could appear after\ e.g.\ a classical shock formation. Moreover, it remained unclear in~\cite{CFKW} whether every measure-valued solution is a vanishing viscosity limit. So we aim to show in the present paper that there exists also a measure-valued solution to the isentropic Euler system that coincides with a classical compression wave solution on a non-empty time interval and evolves afterwards such that it cannot be generated by sequences of weak solutions or by a vanishing viscosity sequence, see Theorem~\ref{Thm:generalizedYM}. As a consequence of our results, cf.\ Corollary~\ref{Cor:selectioncriterion}, one can infer a selection criterion for unphysical solutions: Out of the possibly infinitely many measure-valued solutions corresponding to fixed initial data we can discard those consisting of a convex combination of two Dirac measures supported at weak solutions whose lifted states are not wave-cone-connected. While this is a rather technical condition, it does show that there exist admissible measure-valued solutions that do not arise as viscosity limits and may hence be deemed non-physical (cf.\ e.g.\ the discussion in~\cite{BTW}). Therefore, although there still may remain many solutions, at least some of them can be removed. We hasten to add that all measure-valued solutions considered here satisfy the standard entropy condition, and that the specific measure-valued solution from Theorem~\ref{Thm:generalizedYM} does not only not arise from any viscosity sequence with fixed initial data, but from \emph{any} vanishing viscosity sequence.\\
	For our proof we need to generalize the rigidity result Theorem~2 in~\cite{CFKW} further to hold for Young measures with support on the line segment between non-constant states, see Theorem~\ref{Thm:rigidity} below. These states must not be wave-cone-connected, i.e.\ their difference does not lie in the wave-cone of $\mathcal{A}$, which corresponds to the fact that in the classical rigidity result for differential inclusions of Ball and James, Proposition~2 in~\cite{BJ87}, the difference of the considered matrices must have rank greater than one. Moreover, our generalization to the case of $L^1$-integrable generating sequences is expedient to include in our main result the case of weak solutions with only the integrability $L^{\gamma}\times L^2$ of the energy space. For the generalization to $L^1$, equi-integrability is an important assumption on the generating sequence, which will be guaranteed by the notion of generating Young measures in the generalized sense and the assumption of bounded energy for our generating sequence. We give a self-contained proof for the $L^1$-case and hope that our rigidity result Theorem~\ref{Thm:rigidity} might be found interesting in its own right.\\
	In the past ten years, the method of convex integration has led to major breakthroughs in the field of PDEs and in particular in fluid mechanics. Originally introduced by M. Gromov in a more geometric setting, De Lellis and Sz\'ekelyhidi adapted this concept to the Euler system, cf.~\cite{DS09} and~\cite{DS10}. This method provides a powerful tool to generate infinitely many weak solutions with certain properties as\ e.g.\ $L^{\infty}$-boundedness or prescribed energy profiles. In particular, by means of convex integration it was shown that for the incompressible Euler system existence and non-uniqueness hold for any initial data and any given energy profile, cf.~\cite{Wie11} and~\cite{DS10}. These methods have been transferred to the compressible case by considering constant-in-time densities. In particular, to any smooth initial density and suitably chosen initial velocity there correspond infinitely many weak solutions, as shown in~\cite{Chi14}. It even happens that after the shock formation of a classical Lipschitz compression wave infinitely many weak solutions can arise. The latter phenomenon is shown in~\cite{CDK} and will be utilized in the proof of our Theorem~\ref{Thm:generalizedYM} below.\\
	In Section~\ref{Sect:preliminaries} the notation and general concepts concerning homogeneous differential operators and Young measures used throughout this paper will be established. We will prove in Section~\ref{Sect:rigidity} our rigidity result for non-constant states and equi-integrable generating sequences, cf.\ Theorem~\ref{Thm:rigidity}. For that, a localization procedure of Fonseca and M\"uller~\cite{FM} is used. In Section~\ref{Sect:application} we first use the compression wave solution of~\cite{CDK} and show in Proposition~\ref{Prop:Chiodaroli} that it generates also infinitely many non-wave-cone-connected weak solutions. Finally, with these solutions the desired non-generable measure-valued solutions are then constructed in the main result, Theorem~\ref{Thm:generalizedYM}.
	
	\section{Preliminaries}\label{Sect:preliminaries}
	We consider a linear homogeneous differential operator $\mathcal{A}$ of order one with constant coefficients on $\R^d$,\ i.e.\ $\mathcal{A}$ is of the form
	\begin{align*}
		\mathcal{A}=\sum_{l=1}^{d}A^{l}\partial_l,
	\end{align*}
	where for some $k\in \N$ the matrices $A^{l}\in\R^{k\times m}$ are constant coefficients. Hence, it acts on a function $z\colon\Omega\rightarrow \R^m$ by
	\begin{align*}
		\mathcal{A}z&=\sum_{l=1}^{d}A^{l}\partial_{l}z=\left(\sum_{l=1}^{d}\partial_l\sum_{j=1}^{m}A^{l}_{ij}z_j\right)_{i=1,...,k}=:\left(\sum_{l=1}^{d}\partial_lZ^{l}_i\right)_{i=1,...,k}.
	\end{align*}
	Here, we introduced the $k\times d$-matrix-valued function $\left(Z^{l}_i\right)_{il}$ corresponding to $z$. Further, we write $\mathcal{A}^*$ for the adjoint operator defined by
	\begin{align*}
		\mathcal{A}^*=\sum_{l=1}^{d}\left(A^l \right)^T\partial_l.
	\end{align*}
	This implies that for $\varphi\in C_c^{\infty}\left(\Omega,\R^k\right)$ and $\psi\in C_c^{\infty}\left(\Omega,\R^m\right)$ we have
	\begin{align*}
		\int_{\Omega}\mathcal{A}\psi\cdot\varphi \dx=-\int_{\Omega}\psi\cdot\mathcal{A}^*\varphi \dx.
	\end{align*}
	Define the $k\times m$ matrix
	\begin{align*}
		\mathbb{A}(\xi):=\sum_{l=1}^{d}\xi_lA^{l}
	\end{align*}
	for $\xi\in \R^{d}$. We say $\mathcal{A}$ has the constant rank property if there exists $r\in \N$ with $\operatorname{rank}\mathbb{A}(\xi)=r$ for all $\xi\in \mathbb{S}^{d-1}$.\\
	Further, define the \textit{wave cone} of $\mathcal{A}$ by
	\begin{align*}
		\Lambda_{\mathcal{A}}:=\left\{\bar{z}\in \R^m\backslash\{0\}\,:\, \exists\xi\in \R^{d}\backslash\{0\}\textup{ with }\mathcal{A}(h(x\cdot\xi)\bar{z})=0\textup{ in }\mathcal{D}'\left(\Omega,\R^k\right)\,\forall h\in L^1_{\operatorname{loc}}(\R,\R) \right\}.
	\end{align*}
	We give a simple characterization of the wave cone.
	\begin{lem}\label{lemm:charwavecone}
		For linear homogeneous differential operators $\mathcal{A}$ of order one holds:
		\begin{align*}
			\Lambda_{\mathcal{A}}=\left\{\bar{z}\in\R^m\backslash\{0\}\,:\,\left(\bar{Z}^{l}_i\right)_{il}\textup{ does not have full rank} \right\}.
		\end{align*}
	\end{lem}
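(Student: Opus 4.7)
The plan is to reduce the wave-cone condition to a pointwise algebraic statement about $\mathbb{A}(\xi)\bar{z}$ and then translate this into a rank condition on the $k\times d$ matrix $\bigl(\bar{Z}^l_i\bigr)_{il}$.

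First I would establish the equivalence
\[
\bar{z}\in\Lambda_{\mathcal{A}} \quad\Longleftrightarrow\quad \exists\,\xi\in\R^d\setminus\{0\}\text{ with }\mathbb{A}(\xi)\bar{z}=0.
\]
For a fixed $\xi$ and $h\in L^1_{\loc}(\R)$, a direct distributional computation gives
\[
\partial_l\bigl(h(x\cdot\xi)\bar{z}\bigr)=\xi_l\,h'(x\cdot\xi)\,\bar{z}
\quad\text{in }\mathcal{D}'(\Omega,\R^m),
\]
so that $\mathcal{A}\bigl(h(x\cdot\xi)\bar{z}\bigr)=h'(x\cdot\xi)\,\mathbb{A}(\xi)\bar{z}$. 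If $\mathbb{A}(\xi)\bar{z}=0$, then the right-hand side vanishes for every $h$, proving the $\Leftarrow$ direction. For $\Rightarrow$, assuming the defining property of $\Lambda_{\mathcal{A}}$ holds for some $\xi\neq 0$, I would test it with the affine choice $h(t)=t$, for which $h'\equiv 1$, so the identity $\mathcal{A}\bigl(h(x\cdot\xi)\bar{z}\bigr)=\mathbb{A}(\xi)\bar{z}=0$ in $\mathcal{D}'$ forces the constant vector $\mathbb{A}(\xi)\bar{z}$ to vanish.

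Second, I would unfold the definition of $\mathbb{A}(\xi)\bar{z}$ component-wise. Using $\bar{Z}^l_i=\sum_j A^l_{ij}\bar{z}_j$, one has
\[
\bigl(\mathbb{A}(\xi)\bar{z}\bigr)_i=\sum_{l=1}^d\xi_l\sum_{j=1}^m A^l_{ij}\bar{z}_j=\sum_{l=1}^d\xi_l\,\bar{Z}^l_i,\qquad i=1,\dots,k.
\]
Hence $\mathbb{A}(\xi)\bar{z}=0$ for some $\xi\neq 0$ if and only if the $d$ columns of the matrix $\bigl(\bar{Z}^l_i\bigr)_{il}\in\R^{k\times d}$ admit a nontrivial linear dependence, i.e.\ the matrix fails to have full (column) rank. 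Combining this with the first step yields the claimed characterization.

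Neither step presents a genuine obstacle; the only mild subtlety is the justification of the chain-rule identity for $\partial_l(h(x\cdot\xi)\bar{z})$ when $h$ is only in $L^1_{\loc}$, but this is standard by approximation of $h$ by smooth functions and passage to the limit in the distributional pairing, or equivalently by a direct one-dimensional integration-by-parts after a linear change of variables sending $\xi$ to $e_1$.
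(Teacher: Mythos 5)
Your proof is correct and is, in essence, the explicit version of the one-line argument the paper gives (``follows by writing out the definitions of $\mathbb{A}$ and $\bar{Z}$''): you first reduce the definition of $\Lambda_{\mathcal{A}}$ to the classical pointwise condition $\mathbb{A}(\xi)\bar z=0$ for some $\xi\neq 0$, and then observe that $\mathbb{A}(\xi)\bar z=\bar Z\xi$, so nontriviality of the kernel is exactly failure of full column rank of the $k\times d$ matrix $\bar Z$. The one point worth flagging is that you rightly parse ``full rank'' as full \emph{column} rank; as stated the lemma is only accurate when $k\ge d$ (which is the standing assumption in all of the paper's applications), since for $k<d$ a full--row--rank $\bar Z$ still has a nontrivial kernel.
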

	\begin{proof}
		This follows by writing out the definitions of $\mathbb{A}$ and $\bar{Z}$.
	\end{proof}	
	Let us now specify the notions concerning Young measures used throughout this paper:\\
	Let $X\subset \R^m$ be a measurable subset. We write $\mathcal{M}^+(X)$ for the space of non-negative Radon measures on $X$. The subset $\mathcal{M}^1(X)\subset \mathcal{M}^+(X)$ denotes the set of probability measures,\ i.e.\ finite Radon measures $\nu$ satisfying $\nu(X)=1$. For $\Omega\subset \R^d$ measurable and $m\in \mathcal{M}^+(\Omega)$, we denote by $L^{\infty}_{\operatorname{w}}\left(\Omega,m,\mathcal{M}^1(X)\right)$ the space of maps $\nu\colon \Omega\to\mathcal{M}^1(X),\,x\mapsto\nu_x$ that are weakly*-measurable with respect to $m$. This means for all functions $f\in C_b(X)$ the map $x\mapsto\int_{X}^{}f(z)\dd\nu_x(z)=:\langle \nu_x,f\rangle$ is $m$-measurable. If $m$ is the Lebesgue-measure, we omit it in the notation.\\
	A map $\nu\in L^{\infty}_{\operatorname{w}}\left(\Omega,\mathcal{M}^1\left(\R^m\right)\right)$ is called a (classical) Young measure. We say that a sequence $(z_n)\subset L^1\left(\Omega,\R^m\right)$ generates the Young measure $\nu$ if
	\begin{align*}
		\lim\limits_{n\rightarrow \infty}\int_{\Omega}^{}\varphi(x)f(z_n(x))\dx=\int_{\Omega}^{}\varphi(x)\langle \nu_x,f\rangle \dx
	\end{align*}
	for all $\varphi\in L^1(\Omega)$ and $f\in C_0(\R^m)$.\\
	For $1\leq p,q<\infty$ define the set
	\begin{align*}
		\mathbb{S}^{l+m-1}_{p,q}:=\left\{(\beta_1,\beta_2)\in\R^{l+m}\,:\,|\beta_1|^{2p}+|\beta_2|^{2q}=1 \right\}.
	\end{align*}
	Then the set $\mathcal{F}_{p,q}$ is defined to be the collection of all Carath\'eodory functions $f\colon \Omega\times \R^{l+m}\to\R$,\ i.e.\ measurable in the first and continuous in the second component, whose $p$-$q$-recession function
	\begin{align*}
		f^{\infty}(x,\beta_1,\beta_2):=\underset{\underset{\underset{s\rightarrow\infty}{\left(\beta_1',\beta_2'\right)\rightarrow\left(\beta_1,\beta_2\right)}}{x'\rightarrow x}}{\lim}\frac{f\left(x',s^q\beta_1',s^p\beta_2'\right)}{s^{pq}}
	\end{align*}
	exists and is continuous on $\bar{\Omega}\times\mathbb{S}^{l+m-1}_{p,q}$. A generalized Young measure $(\nu,m,\nu^{\infty})$ now is a triple consisting of a classical Young measure $\nu\in L^{\infty}_{\operatorname{w}}\left(\Omega,\mathcal{M}^1\left(\R^{l+m}\right)\right)$, a non-negative measure $m\in\mathcal{M}^+(\bar{\Omega})$, and a map $\nu^{\infty}\in L^{\infty}_{\operatorname{w}}\left(\bar{\Omega},m,\mathcal{M}^1\left(\mathbb{S}^{l+m-1}_{p,q}\right)\right)$. We say that a sequence $(z_n,w_n)\subset L^p\left(\Omega,\R^l\right)\times L^q\left(\Omega,\R^m\right)$ generates the generalized Young measure $(\nu,m,\nu^{\infty})$ if
	\begin{align*}
		\int_{\Omega}\varphi(x)f(x,z_n(x),w_n(x))\dx\rightarrow&\int_{\Omega}\varphi(x)\int_{\R^{l+m}}^{}f(x,\lambda_1,\lambda_2)\dd\nu_x(\lambda_1,\lambda_2)\dx\\
		&+\int_{\bar{\Omega}}\varphi(x)\int_{\mathbb{S}^{m+l-1}_{p,q}}^{}f^{\infty}(x,\beta_1,\beta_2)\dd\nu_x^{\infty}(\beta_1,\beta_2)\dd m(x)
	\end{align*}
	for all $\varphi\in C_c(\bar{\Omega})$ and $f\in\mathcal{F}_{p,q}$.\\
	This generalizes the case of a sequence with only a single $L^p$-integrability. In this case set
	\begin{align*}
		\mathbb{S}^{m-1}_p:=\left\{\beta\in\R^m\,:\,|\beta|^{2p}=1 \right\}
	\end{align*}
	and define the set $\mathcal{F}_p$ to be the collection of Carath\'eodory functions $f\colon \Omega\times \R^m\to\R$ whose $p$-recession function
	\begin{align*}
		f_{\infty}(x,\beta):=\underset{\underset{\underset{s\rightarrow\infty}{\beta'\rightarrow\beta}}{x'\rightarrow x}}{\lim}\frac{f\left(x',s^p\beta'\right)}{s^{p}}
	\end{align*}
	exists and is continuous on $\bar{\Omega}\times \mathbb{S}^{m-1}_p$. A sequence $(z_n)\in L^p(\Omega,\R^m)$ then generates the generalized Young measure $(\nu,m,\nu^{\infty})\in L^{\infty}_{\operatorname{w}}\left(\Omega,\mathcal{M}^1\left(\R^m\right)\right)\times \mathcal{M}^+(\bar{\Omega})\times L^{\infty}_{\operatorname{w}}\left(\bar{\Omega},m,\mathcal{M}^1\left(\mathbb{S}^{m-1}_{p}\right)\right)$ if
	\begin{align*}
		\int_{\Omega}\varphi(x)f(x,z_n(x))\dx\rightarrow\int_{\Omega}\varphi(x)\int_{\R^{m}}^{}f(x,\lambda)\dd\nu_x(\lambda)dx+\int_{\bar{\Omega}}\varphi(x)\int_{\mathbb{S}^{m-1}_{p}}^{}f^{\infty}(x,\beta)\dd\nu_x^{\infty}(\beta)\dd m(x)
	\end{align*}
	for all $\varphi\in C_c(\bar{\Omega})$ and $f\in\mathcal{F}_{p}$.\\
	One observes the following preliminary fact.
	\begin{lem}\label{Lemma:generatingmeasures}
		Let $(z_n)\in L^1\left(\Omega,\R^m\right)$ be a sequence generating the generalized Young measure $(\nu,0,\mu)$, where $\mu$ is a dummy variable completing our notation. Then, the sequence $(z_n)$ generates the (classical) Young measure $\nu$.
	\end{lem}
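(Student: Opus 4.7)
The plan is to reinterpret the given hypothesis, in which classical Young measure test functions $f \in C_0(\R^m)$ appear to be replaced by the more restrictive class $\mathcal{F}_1$, and to extend the admissible class of spatial test functions from $C_c(\bar\Omega)$ to $L^1(\Omega)$ by a density argument.

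\emph{Step 1: every $f \in C_0(\R^m)$ lives in $\mathcal{F}_1$.} Given $f \in C_0(\R^m)$, view it as a function on $\Omega \times \R^m$ that is constant in the spatial variable. It is then trivially Carath\'eodory. Its $1$-recession function
\begin{align*}
f_\infty(x,\beta) = \lim_{\substack{x' \to x \\ \beta' \to \beta \\ s \to \infty}} \frac{f(s\beta')}{s}
\end{align*}
exists and vanishes identically, since $|f(s\beta')|/s \leq \|f\|_\infty / s \to 0$. In particular $f_\infty$ is continuous on $\bar\Omega \times \mathbb{S}^{m-1}_1$, so $f \in \mathcal{F}_1$.

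\emph{Step 2: applying the generalized generation hypothesis.} For any $\varphi \in C_c(\bar\Omega)$, the assumption that $(z_n)$ generates $(\nu, 0, \mu)$ in the generalized sense yields
\begin{align*}
\int_\Omega \varphi(x) f(z_n(x))\,\dx \;\longrightarrow\; \int_\Omega \varphi(x) \langle \nu_x, f \rangle\,\dx + \int_{\bar\Omega} \varphi(x) \langle \mu_x, f_\infty \rangle\,\dd 0(x),
\end{align*}
and the concentration term vanishes because the underlying measure $m = 0$ (so also trivially by $f_\infty \equiv 0$). This is the desired classical convergence for $\varphi \in C_c(\bar\Omega)$.

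\emph{Step 3: extension to $\varphi \in L^1(\Omega)$.} Since $f$ is bounded, both $f(z_n)$ and $x \mapsto \langle \nu_x, f \rangle$ are bounded by $\|f\|_\infty$ uniformly. Given $\varphi \in L^1(\Omega)$ and $\epsilon > 0$, choose $\varphi_k \in C_c(\bar\Omega)$ with $\|\varphi - \varphi_k\|_{L^1} \leq \epsilon/(3\|f\|_\infty + 1)$. A standard triangle-inequality argument then bounds $\bigl|\int_\Omega \varphi (f(z_n) - \langle \nu_\cdot, f\rangle)\,\dx\bigr|$ by the two errors $\|\varphi - \varphi_k\|_{L^1}\|f\|_\infty$ plus the convergent term from Step 2 applied to $\varphi_k$, which is $\leq \epsilon/3$ for $n$ large. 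Since $\epsilon$ was arbitrary, this proves that $(z_n)$ generates $\nu$ in the classical sense.

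No step poses a real obstacle; the content of the lemma is essentially that the $1$-recession function of a function vanishing at infinity is zero, together with the standard density of $C_c(\bar\Omega)$ in $L^1(\Omega)$. The only point warranting care is verifying that every $f \in C_0(\R^m)$, regarded as $x$-independent, actually satisfies the continuity requirement on $f_\infty$ needed for membership in $\mathcal{F}_1$, which follows from the preceding computation.
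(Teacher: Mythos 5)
Your proof is correct and mirrors the paper's argument: both observe that bounded $f \in C_0(\R^m)$ have vanishing $1$-recession function (hence $f \in \mathcal{F}_1$), apply the generalized generation hypothesis with concentration measure $m=0$, and then extend from $C_c$ to $L^1$ test functions by the standard density/triangle-inequality argument.
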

	\begin{proof}
		Let $f\in C_0(\R^m)$ and $\varphi\in L^1(\Omega,\R)$ be arbitrary. As $f$ is bounded, its $1$-recession function is identically zero, thus $f\in \mathcal{F}_{1}$. Moreover, for all $\varepsilon>0$ let $\varphi_{\varepsilon}\in C_c(\Omega,\R)$ such that $\|\varphi_{\varepsilon}-\varphi\|_{L^1(\Omega)}\leq \frac{\varepsilon}{2}$. Then, we obtain
		\begin{align*}
			\left|\int_{\Omega}^{}\varphi(x)f(z_n(x))\dx-\int_{\Omega}\varphi(x)\langle \nu_{x},f\rangle \dx\right|\leq \left|\int_{\Omega}^{}\varphi_{\varepsilon}(x)f(z_n(x))\dx-\int_{\Omega}\varphi_{\varepsilon}(x)\langle \nu_{x},f\rangle \dx\right|+\varepsilon\overset{n\rightarrow\infty}{\rightarrow}\varepsilon.
		\end{align*}
		Hence, the sequence $(z_n)$ indeed generates the Young measure $\nu$.
	\end{proof}

	\section{$\mathcal{A}$-Free Rigidity for the Case of Equi-Integrable Sequences}\label{Sect:rigidity}
	The aim of this section is to generalize the rigidity result for $\mathcal{A}$-free sequences from~\cite{CFKW}, Theorem~2. They showed that for $1<p<\infty$ an $L^p$-bounded $\mathcal{A}$-free sequence cannot generate a Young measure with support on the line segment between two non-wave-cone-connected constant states. We will generalize this to the case of line segments between two non-constant states and furthermore also to the case $p=1$ for which we have to assume also equi-integrability of the generating sequence.\\
	We note that Theorem~\ref{Thm:rigidity} below also generalizes the compensated compactness result Theorem~1.2 in~\cite{DPR} for the case of a linear differential inclusion of order one to the setting of Young measures.\\	
	We proceed step by step. First the extension to the case $p=1$ is treated. Note that we need not assume the constant rank property yet.
	\begin{prop}\label{Prop:pisone}
		Consider the torus $\mathbb{T}^d$. Let $\mathcal{A}$ be a linear homogeneous differential operator of order one satisfying $k\geq d$, and $1\leq p< \infty$. Let $\bar{z}_1,\bar{z}_2\in\R^m$ be two constant states such that $\bar{z}_2-\bar{z}_1\notin \Lambda_{\mathcal{A}}$. Let $z_n\colon \mathbb{T}^d\to \R^m$ be an equi-integrable family of functions with
		\begin{align*}
			\|z_n\|_{L^p\left(\mathbb{T}^d,\R^m\right)}&\leq c<\infty\text{ and }\\
			\mathcal{A}z_n&=0\text{ in }\mathcal{D}'\left(\mathbb{T}^d\right),
		\end{align*}
		which generates a Young measure $\nu_{x}\in \mathcal{M}^1\left(\R^m\right)$ such that
		\begin{align*}
			\operatorname{supp}(\nu_x)\subset\left\{\lambda\bar{z}_1+(1-\lambda)\bar{z}_2\,:\,\lambda\in[0,1] \right\}
		\end{align*}
		for\ a.e.\ $x\in\mathbb{T}^d$.\\
		Then 
		\begin{align*}
			z_n\rightarrow z_{\infty}\text{ for }1\leq p<\infty\text{ in }L^p\left(\mathbb{T}^d\right)
		\end{align*}
		and
		\begin{align*}
			\nu_x=\delta_{z_{\infty}}
		\end{align*}
		for\ a.e.\ $x\in \mathbb{T}^d$ with $z_{\infty}=\bar{\lambda}\bar{z}_1+(1-\bar{\lambda})\bar{z}_2$ a constant function,\ i.e.\ $\bar{\lambda}\in [0,1]$ fixed.
	\end{prop}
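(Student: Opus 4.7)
The plan is to adapt the argument for $1<p<\infty$ of Chiodaroli--Feireisl--Kreml--Wiedemann~\cite{CFKW} to the borderline $L^1$ setting by invoking equi-integrability together with the Fonseca--M\"uller localization~\cite{FM}. After translating by $\bar z_1$ I assume $\bar z_1 = 0$, set $w := \bar z_2 \neq 0$ with $w \notin \Lambda_{\mathcal{A}}$, and split
\begin{align*}
z_n = \lambda_n w + e_n, \qquad \lambda_n := \frac{\langle z_n, w\rangle}{|w|^2},\qquad e_n := z_n - \lambda_n w \perp w.
\end{align*}
The support condition on $\nu_x$ combined with a standard truncation and Vitali's theorem applied to the equi-integrable family $|e_n|^p \lesssim |z_n|^p$ then gives $e_n \to 0$ in $L^p(\mathbb{T}^d)$.

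I will next exploit $\mathcal{A}$-freeness by computing $\mathcal{A}(\lambda_n w) = \sum_{l=1}^{d}(A^l w)\,\partial_l \lambda_n = \bar W\,\nabla \lambda_n$, where $\bar W$ is the $k\times d$ matrix $(\bar W^l_i)_{i,l}$ appearing in Lemma~\ref{lemm:charwavecone}. The hypothesis $w \notin \Lambda_{\mathcal{A}}$ together with $k\geq d$ forces $\bar W$ to have full column rank, so a left inverse $B\in\mathbb{R}^{d\times k}$ delivers
\begin{align*}
\nabla \lambda_n = -B\,\mathcal{A} e_n \longrightarrow 0 \quad \text{in } W^{-1,p}(\mathbb{T}^d).
\end{align*}
It then suffices to upgrade this negative-Sobolev decay to strong convergence $\lambda_n \to \bar\lambda$ in $L^p$ for some constant $\bar\lambda \in [0,1]$, because combined with $e_n \to 0$ this produces $z_n \to z_\infty := \bar\lambda w$ strongly, and thus automatically $\nu_x = \delta_{z_\infty}$.

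For $1<p<\infty$ this last upgrade is a direct Fourier argument on the torus: for every $\xi \neq 0$, $\hat\lambda_n(\xi)$ is controlled by $\widehat{\nabla \lambda_n}(\xi)/|\xi|$, and a Mikhlin--H\"ormander multiplier estimate yields $\lambda_n - \mathrm{mean}(\lambda_n) \to 0$ in $L^p$, while the scalar means converge along a subsequence to some $\bar\lambda \in [0,1]$.

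The hard part is the endpoint $p=1$, where Mikhlin's theorem is unavailable. Here I would invoke the localization procedure of Fonseca and M\"uller: at a.e.\ point $x_0$, a blow-up of $(z_n)$ along small balls around $x_0$ produces a sequence that is asymptotically $\mathcal{A}$-free, inherits equi-integrability, and generates the constant-in-space Young measure $\nu_{x_0}$ still supported on $[0,w]$. A truncation argument using equi-integrability replaces the blown-up sequence by a uniformly bounded one with the same Young measure, reducing to the $L^2$ regime already handled. The resulting rigidity forces $\nu_{x_0}$ to be a Dirac mass concentrated at a single point of the segment for a.e.\ $x_0$, which gives $\lambda_n \to \bar\lambda$ in measure and hence, by equi-integrability, in $L^1$. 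Combined with $\nabla \lambda_n \to 0$, the limit $\bar\lambda$ must be spatially constant, completing the argument.
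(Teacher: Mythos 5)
Your decomposition step and the Fourier argument for $1<p<\infty$ are essentially in line with \cite{CFKW}, but your plan for the endpoint $p=1$ diverges from the paper and, more importantly, has a gap that I do not see how to close.

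The paper proves Proposition~\ref{Prop:pisone} as the \emph{base case} of the whole rigidity package, directly on the torus, without any localization: after arriving at $-\Delta(\lambda_n\ast\eta_\varepsilon)=\operatorname{div}\operatorname{div}\bigl(B(E_n\ast\eta_\varepsilon)\bigr)$ it invokes the \emph{weak-type} $(1,1)$ bound for the double Riesz transform $(-\Delta)^{-1}\operatorname{div}\operatorname{div}$ on $\T^d$ (H\"ormander's multiplier theorem at the $L^1\to L^{1,\infty}$ endpoint), yielding $\lambda_n-\text{mean}(\lambda_n)\to 0$ in $L^{1,\infty}$, hence convergence in measure, hence $L^1$ convergence via Vitali and equi-integrability. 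That endpoint estimate is the key ingredient missing from your argument. (Note also that, precisely because it avoids localization, Proposition~\ref{Prop:pisone} does \emph{not} require the constant rank assumption; your route through Fonseca--M\"uller would silently add it.)

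Your substitute for $p=1$ is: Fonseca--M\"uller blow-up at a.e.\ $x_0$, then ``a truncation argument using equi-integrability replaces the blown-up sequence by a uniformly bounded one with the same Young measure, reducing to the $L^2$ regime.'' The truncation step fails: writing $z_n^M:=z_n\mathds{1}_{\{|z_n|\le M\}}$, equi-integrability gives $\sup_n\|z_n-z_n^M\|_{L^1}\to 0$ as $M\to\infty$, so $\mathcal{A}z_n^M$ is small only in $W^{-1,1}$. The $L^p$-rigidity you wish to invoke for the bounded sequence $z_n^M$ requires exact $\mathcal{A}$-freeness (CFKW, Theorem~2) or at least $\mathcal{A}z_n^M\to 0$ in $W^{-1,r}$ for some $r>1$ (as in Theorem~\ref{Thm:rigidity}); $W^{-1,1}$-smallness is strictly weaker and you have no $L^q$ control on the truncation error for any $q>1$. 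Pointwise truncation is a nonlinear operation and genuinely destroys the linear PDE constraint, so this reduction does not go through. One would either need the endpoint Calder\'on--Zygmund estimate as in the paper, or an entirely different device to restore (approximate) $\mathcal{A}$-freeness after truncation.

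A smaller point: with $\lambda_n:=\langle z_n,w\rangle/|w|^2$ you lose the pointwise constraint $\lambda_n\in[0,1]$ that the paper's construction via the cut-off $F_\delta$ provides; you still get $L^1$-control on $\lambda_n$ since $|\lambda_n|\,|w|\le|z_n|$, so the means are bounded and a convergent subsequence exists, but the conclusion $\bar\lambda\in[0,1]$ then needs an extra sentence (e.g.\ via the support of the Young measure). This is cosmetic; the weak-$(1,1)$ ingredient is the substantive gap.
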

	\begin{rem}
		We need the assumption $k\geq d$ in the following proof to find a left inverse of the full rank $k\times d$-matrix $\bar{Z}_1-\bar{Z}_2$. This condition corresponds in the applications of our rigidity results to the case of formally under- or well-determined systems. In particular, the linearized Euler system (\ref{eq:CEsubsolution}) below is underdetermined.
	\end{rem}
	\begin{proof}
		The case $1<p<\infty$ is treated in Theorem~2 in~\cite{CFKW} and easily translates to the case of the torus as domain. Note that in this case equi-integrability is trivially fulfilled for all $L^p$-bounded families of functions.\\
		\\
		We now change the proof of Theorem~2 in~\cite{CFKW} appropriately to show the case $p=1$:\\
		\textbf{Step 1:}
		For the first step we can consider a general Lipschitz domain $\Omega \in \R^d$, since we only work in $L^1$ and $L^1\left(\T^d\right)\simeq L^1\left([-\pi,\pi]^d\right)$.\\
		Let us first prove that $z_n$ is of the form
		\begin{align}
			z_n(x)=e_n(x)+\lambda_n(x)\bar{z}_1+(1-\lambda_n(x))\bar{z}_2\label{eq:formofzn}
		\end{align}
		for $e_n\rightarrow 0$ in $L^1\left(\Omega,\R^d\right)$ and $\lambda_n\in L^{\infty}\left(\Omega,\R^d\right)$ with $0\leq \lambda_n(x)\leq 1$ for\ a.e.\ $x\in\Omega$.\\
		For this, let $\delta>0$ be arbitrary. Then, there exists a function $F_{\delta}\in C^{\infty}\left(\R^d\right)$, $0\leq F_{\delta}\leq 1$ with
		\begin{align*}
			F_{\delta}\equiv&0 \textup{ on }\left\{\bar{z}\in \R^d\,:\,\operatorname{dist}\left(\bar{z},\big\{\lambda\bar{z}_1+(1-\lambda)\bar{z}_2\,:\, \lambda\in[0,1] \big\}\right)<\frac{\delta}{2} \right\},\\
			F_{\delta}\equiv&1 \textup{ on }\left\{\bar{z}\in \R^d\,:\,\operatorname{dist}\left(\bar{z},\big\{\lambda\bar{z}_1+(1-\lambda)\bar{z}_2\,:\, \lambda\in[0,1] \big\}\right)<\delta \right\}^c.
		\end{align*}
		So, $z_n=F_{\delta}(z_n)z_n+(1-F_{\delta}(z_n))z_n$. Since $z_n$ is equi-integrable, so is $(F_{\delta}(z_n)z_n)$. As $K:=\big\{\lambda\bar{z}_1+(1-\lambda)\bar{z}_2\,:\,\lambda\in[0,1] \big\}\subset \R^m$ is a compact set, we infer from Theorem~2.2 in~\cite{FM} that there is a subsequence $(z_{n})$ with
		\begin{align*}
			\operatorname{dist}(z_{n},K)\rightarrow 0 \text{ in measure}.
		\end{align*}
		Thus, there is yet another subsequence $(z_{n})$ such that $\operatorname{dist}(z_n,K)\rightarrow 0$ a.e.. Hence, by the definition of $F_{\delta}$ there holds $F_{\delta}(z_n(x))z_n(x)\rightarrow 0$ a.e., which implies $F_{\delta}(z_n(x))z_n(x)\rightarrow 0$ in measure. So, by Vitali's convergence theorem, we obtain for all fixed $\delta>0$ that
		\begin{align*}
			F_{\delta}(z_n)z_n\rightarrow 0 \text{ in }L^1\left(\Omega,\R^m\right)
		\end{align*}
		as $n\rightarrow \infty$. Moreover, for fixed $\delta>0$, we rewrite
		\begin{align*}
			(1-F_{\delta}(z_n))z_n=&(1-F_{\delta}(z_n))\left(z_n-\lambda^{\delta}_n\bar{z}_1-\left(1-\lambda^{\delta}_n\right)\bar{z}_2\right)-F_{\delta}(z_n)\left(\lambda^{\delta}_n\bar{z}_1+\left(1-\lambda^{\delta}_n\right)\bar{z}_2\right)\\
			&+\left(\lambda^{\delta}_n\bar{z}_1+\left(1-\lambda^{\delta}_n\right)\bar{z}_2\right)
		\end{align*}
		by choosing functions $0\leq \lambda^{\delta}_n(x)\leq 1$ for a.a. $x\in \Omega$ such that
		\begin{align*}
			\left|(1-F_{\delta}(z_n))\left(z_n-\lambda_n^{\delta}\bar{z}_1-\left(1-\lambda_n^{\delta}\right)\bar{z}_2\right)\right|\leq \delta
		\end{align*}
		and
		\begin{align*}
			\left\|F_{\delta}(z_n)\left(\lambda_n^{\delta}\bar{z}_1+\left(1-\lambda_n^{\delta}\right)\bar{z}_2\right)\right\|_{L^1\left(\Omega,\R^m\right)}\overset{n\rightarrow \infty}{\rightarrow}0.
		\end{align*}
		Therefore, set $\delta_k:=\frac{1}{k}\rightarrow 0$, and for all $k\in\N$ choose $n_k$ large enough such that
		\begin{align*}
			\left\|F_{\frac{1}{k}}\left(z_{n_k}\right)z_{n_k}\right\|_{L^1\left(\Omega,\R^m\right)}&\leq \frac{1}{k},\\
			\left\|F_{\frac{1}{k}}\left(z_{n_k}\right)\left(\lambda_{n_k}^{\frac{1}{k}}\bar{z}_1+\left(1-\lambda_{n_k}^{\frac{1}{k}}\right)\bar{z}_2\right)\right\|_{L^1\left(\Omega,\R^m\right)}&\leq \frac{1}{k}.
		\end{align*}
		Then, defining
		\begin{align*}
			e_k:=F_{\frac{1}{k}}\left(z_{n_k}\right)z_{n_k}+\left(1-F_{\frac{1}{k}}\left(z_{n_k}\right)\right)\left(z_n-\lambda_{n_k}^{\frac{1}{k}}\bar{z}_1-\left(1-\lambda_{n_k}^{\frac{1}{k}}\right)\bar{z}_2\right)-F_{\frac{1}{k}}\left(z_{n_k}\right)\left(\lambda_{n_k}^{\frac{1}{k}}\bar{z}_1+\left(1-\lambda_{n_k}^{\frac{1}{k}}\right)\bar{z}_2\right)
		\end{align*}
		yields the identity (\ref{eq:formofzn}) with $\lambda_k\equiv \lambda^{\frac{1}{k}}_{n_k}$. Moreover, we have
		\begin{align*}
			\|e_k\|_{L^1\left(\Omega,\R^m\right)}\overset{k\rightarrow \infty}{\rightarrow}0.
		\end{align*}
		\textbf{Step 2:}\\
		Now as we will use the Calder\'on-Zygmund theorem, we have to restrict ourselves to the case of the torus $\T^d$ or the whole space $\R^d$ as domain. The case of the torus will be of use for the subsequent results, as it is also a bounded set. So, we only give the proof for $\T^d$.\\
		Rewriting the definitions for the capital letter variables gives
		\begin{align}
			0=(\mathcal{A}z)_i=\operatorname{div} Z_i\label{eq:divcharA}
		\end{align}
		in $\mathcal{D}'\left(\mathbb{T}^d\right)$. Hence, if we define $\left(E^{l}_n(x)\right)_i:=\sum_{j=1}^{m}A^{l}_{ij}e^j_n(x)$, we obtain by (\ref{eq:formofzn}) and linearity of $\mathcal{A}$ for all $i=1,...,k$ that
		\begin{align*}
			0=\operatorname{div}(E_n)_i+(\bar{Z}_1-\bar{Z}_2)_i\cdot\nabla\lambda_n
		\end{align*}
		in $\mathcal{D}'\left(\mathbb{T}^d,\R\right)$. Moreover,
		\begin{align*}
			\|E_n\|_{L^1\left(\mathbb{T}^d,\R^{k\times d}\right)}\leq C(l,m,k,d,\mathcal{A})\|e_n\|_{L^1\left(\mathbb{T}^d,\R^m\right)}\overset{n\rightarrow \infty}{\rightarrow} 0.
		\end{align*}
		Consider now the standard mollifier $\eta\in C_c^{\infty}\left(\R^d,\R\right)$ and let $\frac{\pi}{2}>\varepsilon>0$. Define $z_{n,\varepsilon}:=z_n\ast\eta_{\varepsilon}$. Then, $z_{n,\varepsilon}$ is $\mathcal{A}$-free for all $\frac{\pi}{2}>\varepsilon>0$, since $z_n$ is $\mathcal{A}$-free. Hence, as $z_{n,\varepsilon}$ is a smooth function on the torus, $\mathcal{A}z_{n,\varepsilon}=0$ holds pointwise everywhere on $\T^d$.\\
		Therefore,
		\begin{align*}
			(\bar{Z}_1-\bar{Z}_2)\nabla (\lambda_n\ast\eta_{\varepsilon})=-\operatorname{div} (E_n\ast\eta_{\varepsilon}).
		\end{align*}
		Now by Corollary~\ref{lemm:charwavecone} we know that $\bar{Z}_1-\bar{Z}_2$ has full rank and thus using $k\geq d$ the matrix $\bar{Z}_1-\bar{Z}_2$ possesses a left inverse $B\in \R^{d\times k}$. Hence, we obtain
		\begin{align*}
			\nabla (\lambda_n\ast\eta_{\varepsilon})=-B\cdot\operatorname{div} (E_n\ast\eta_{\varepsilon}).
		\end{align*}
		Taking the divergence on both sides yields
		\begin{align*}
			-\Delta(\lambda_n\ast\eta_{\varepsilon})=\operatorname{div}\operatorname{div}(B(E_n\ast\eta_{\varepsilon})).
		\end{align*}
		The H\"ormander multiplier theorem applied to the double Riesz transform $(-\Delta)^{-1}\operatorname{div}\operatorname{div}$ on the torus gives
		\begin{align*}
			\left\|\lambda_n\ast\eta_{\varepsilon}-\frac{1}{|\T^d|}\int_{\T^d}^{}\lambda_n\ast\eta_{\varepsilon}(x)\dx\right\|_{L^{1,\infty}\left(\T^d,\R\right)}\leq C\left\|E_n\ast\eta_{\varepsilon}-\frac{1}{|\T^d|}\int_{\T^d}^{}E_n\ast\eta_{\varepsilon}(x)\dx\right\|_{L^1\left(\T^d,\R^{k\times d}\right)}.
		\end{align*}
		Note that zero averages are needed to apply the H\"ormander multiplier theorem on the torus.\\
		Since $E_n$ lies in $L^1\left(\T^d,\R^{k\times d}\right)$, we obtain
		\begin{align*}
			\|E_n\ast\eta_{\varepsilon}-E_n\|_{L^1\left(\T^d,\R^{k\times d}\right)}\overset{\varepsilon\rightarrow 0}{\rightarrow}0.
		\end{align*}
		Similarly, for $\lambda_n\in L^{\infty}\left(\T^d,\R\right)$ it holds that
		\begin{align*}
			\lambda_n\ast\eta_{\varepsilon}\overset{\varepsilon\rightarrow 0}{\rightarrow} \lambda_n\text{ in }L^1\left(\T^d,\R\right),
		\end{align*}
		which implies
		\begin{align*}
			\lambda_n\ast\eta_{\varepsilon}\overset{\varepsilon\rightarrow 0}{\rightarrow} \lambda_n\text{ in }L^{1,\infty}\left(\T^d,\R\right).
		\end{align*}
		Note that for a function $f\in L^1\left(\T^d,\R\right)$ we have
		\begin{align*}
			\frac{1}{|\T^d|}\int_{\T^d}^{}(f\ast\eta_{\varepsilon})(x)\dx=\frac{1}{|\T^d|}\int_{\T^d}f(y)\dy
		\end{align*}
		and
		\begin{align*}
			\left\|\frac{1}{|\T^d|}\int_{\T^d}f(y)\dy\right\|_{L^1\left(\T^d,\R\right)}=\int_{\T^d}|f|(y)\dy=\|f\|_{L^1\left(\T^d,\R\right)}.
		\end{align*}
		As $\|f-g+g\|_{L^{1,\infty}}\leq 2\|f-g\|_{L^{1,\infty}}+2\|g\|_{L^{1,\infty}}$, we obtain
		\begin{align*}
			&\left\|\lambda_n-\frac{1}{|\T^d|}\int_{\T^d}^{}\lambda_n(x)\dx\right\|_{L^{1,\infty}\left(\T^d,\R\right)}\\
			\leq &\lim\limits_{\varepsilon\rightarrow 0}2\|\lambda_n-\lambda_n\ast\eta_{\varepsilon}+0\|_{L^{1,\infty}\left(\T^d,\R\right)}+\lim\limits_{\varepsilon\rightarrow 0}2\left\|\lambda_n\ast\eta_{\varepsilon}-\frac{1}{|\T^d|}\int_{\T^d}^{}\lambda_n\ast\eta_{\varepsilon}(x)dx\right\|_{L^{1,\infty}\left(\T^d,\R\right)}\\
			\leq &\lim\limits_{\varepsilon\rightarrow 0}2C\left(\|E_n\ast\eta_{\varepsilon}\|_{L^1\left(\T^d,\R^{k\times d}\right)}+\left\|\frac{1}{|\T^d|}\int_{\T^d}^{}E_n\ast\eta_{\varepsilon}(x)dx\right\|_{L^1\left(\T^d,\R^{k\times d}\right)}\right)\\
			=&4C\|E_n\|_{L^1\left(\T^d,\R^{k\times d}\right)}\\
			\overset{n\rightarrow \infty}{\rightarrow}&0.
		\end{align*}
		For all $n\in \N$ we have $\frac{1}{|\T^d|}\int_{\T^d}^{} \lambda_n\dx \leq \frac{1}{|\T^d|}\int_{\T^d}^{}1\dx=1$. Thus, for a subsequence it holds that $\frac{1}{|\T^d|}\int_{\T^d}^{} \lambda_n\dx \overset{n\rightarrow \infty}{\rightarrow} \bar{\lambda}\in [0,1]$. Hence, the corresponding subsequence $(z_n)$ converges in measure to the constant function $\bar{\lambda}\bar{z}_1+(1-\bar{\lambda})\bar{z}_2$. This in turn implies by Vitali's convergence theorem using the equi-integrability of $(z_n)$ that
		\begin{align*}
			z_n\rightarrow\bar{\lambda}\bar{z}_1+(1-\bar{\lambda})\bar{z}_2\text{ in }L^1\left(\T^d,\R^m\right).
		\end{align*}
		In particular, it follows that
		\begin{align*}
			\int_{\T^d}^{}\varphi(x)\langle \nu,g\rangle \dx\leftarrow\int_{\T^d}^{}\varphi(x)g(z_n(x))\dx\rightarrow\int_{\T^d}^{}\varphi(x)g(\bar{\lambda}\bar{z}_1+(1-\bar{\lambda})\bar{z}_2)\dx
		\end{align*}
		for all $g\in C_0\left(\R^m\right)$ and $\varphi\in L^1\left(\T^d\right)$. This implies that $\nu_x=\delta_{\bar{\lambda}\bar{z}_1+(1-\bar{\lambda})\bar{z}_2}$ for\ a.e.\ $x\in \T^d$.
	\end{proof}
	This result can be applied to the case of a general domain $\Omega\subset \R^d$. The proof foreshadows the localization argument, which will also be used in Theorem~\ref{Thm:rigidity} below.
	\begin{cor}\label{Cor:pisoneonOmega}
		Let $\Omega\subset \R^d$ be an open bounded domain, $\mathcal{A}$ a linear homogeneous constant rank differential operator of order one satisfying $k\geq d$, and $1\leq p< \infty$. Let $\bar{z}_1,\bar{z}_2\in\R^m$ be two constant states such that $\bar{z}_2-\bar{z}_1\notin \Lambda_{\mathcal{A}}$. Let also $z_n\colon \Omega\to \R^m$ be an equi-integrable family of functions with
		\begin{align*}
			\|z_n\|_{L^p\left(\Omega,\R^m\right)}&\leq c<\infty\text{ and }\\
			\mathcal{A}z_n&=0\text{ in }\mathcal{D}'\left(\Omega,\R^m\right), 
		\end{align*}
		which generates a Young measure $\nu_{x}\in \mathcal{M}^1\left(\R^m\right)$ such that
		\begin{align*}
			\operatorname{supp}(\nu_x)\subset\left\{\lambda\bar{z}_1+(1-\lambda)\bar{z}_2\,:\,\lambda\in[0,1] \right\}
		\end{align*}
		for\ a.e.\ $x\in\Omega$.\\
		Then 
		\begin{align*}
			z_n\rightarrow z_{\infty}\text{ for }1\leq p<\infty\text{ in }L^p\left(\Omega,\R^m\right)
		\end{align*}
		and
		\begin{align*}
			\nu_x=\delta_{z_{\infty}(x)}
		\end{align*}
		for\ a.e.\ $x\in \Omega$ with $z_{\infty}\in L^p\left(\Omega,\R^m\right)$. In the case $1<p<\infty$ we have that $z_{\infty}=\bar{\lambda}\bar{z}_1+(1-\bar{\lambda})\bar{z}_2$ is a constant function,\ i.e.\ $\bar{\lambda}\in [0,1]$ fixed.
	\end{cor}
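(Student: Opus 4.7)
The plan is to reduce to the torus case already established in Proposition~\ref{Prop:pisone} by a blow-up around Lebesgue points of $x\mapsto \nu_x$, in the spirit of Fonseca--M\"uller~\cite{FM}. I would pick $x_0\in\Omega$ that is simultaneously a Lebesgue point of $x\mapsto \langle \nu_x,f\rangle$ for all $f$ in a countable dense subset of $C_0(\R^m)$ and a point with respect to which the equi-integrability of $(z_n)$ localizes well (a full-measure set). For small $r>0$, set $\tilde z_n^r(y):=z_n(x_0+ry)$ on $Q:=(-\tfrac12,\tfrac12)^d$. Since $\mathcal{A}$ is homogeneous of order one, $\tilde z_n^r$ is still $\mathcal{A}$-free on $Q$, $L^p$-bounded and equi-integrable uniformly in $r$, and it generates on $Q$ the Young measure $y\mapsto\nu_{x_0+ry}$, which converges to the constant Young measure $\nu_{x_0}$ as $r\downarrow 0$. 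A diagonal extraction produces $r_n\downarrow 0$ and $\tilde z_n:=\tilde z_n^{r_n}$ generating the constant Young measure $\nu_{x_0}$ on $Q$, with support still contained in the segment $\{\lambda\bar z_1+(1-\lambda)\bar z_2:\lambda\in[0,1]\}$.

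Next, I would transfer $(\tilde z_n)$ to the torus by smooth truncation plus an $\mathcal{A}$-free correction. Picking $Q'\Subset Q$ and $\psi\in C_c^\infty(Q)$ with $\psi\equiv 1$ on $Q'$, the sequence $\psi\tilde z_n$ is compactly supported in $Q$ but generally fails to be $\mathcal{A}$-free. Exploiting the constant rank hypothesis, I would project $\psi\tilde z_n$ onto the kernel of $\mathcal{A}$ via a zero-order Fourier multiplier, producing $w_n$ that is $\mathcal{A}$-free on $\T^d$ after identifying $Q$ with $\T^d$. The defect $\psi\tilde z_n - w_n$ can be shown to vanish in measure on $Q'$ (and even in $L^1$ up to boundary layers of arbitrarily small measure), so $w_n$ generates the same constant Young measure $\nu_{x_0}$ there; this mirrors the Calder\'on--Zygmund/H\"ormander step in the proof of Proposition~\ref{Prop:pisone}, with weak-$L^1$ estimates replacing strong ones when $p=1$. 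Applying Proposition~\ref{Prop:pisone} to $w_n$ then forces $\nu_{x_0}=\delta_{z_\infty(x_0)}$ with $z_\infty(x_0)$ lying on the segment $[\bar z_1,\bar z_2]$.

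Since this holds for a.e. $x_0$, we obtain $\nu_x=\delta_{z_\infty(x)}$ a.e., hence $z_n\to z_\infty$ in measure. Combined with equi-integrability, Vitali's convergence theorem yields $z_n\to z_\infty$ in $L^1$; for $1<p<\infty$ the convergence in $L^p$ follows from the additional $L^p$-boundedness via uniform integrability of $(|z_n|^p)$ (which one obtains from $L^p$-boundedness together with a.e.\ convergence along subsequences, extended to the whole sequence by the usual subsequence principle). Finally, in the case $1<p<\infty$, writing $z_\infty=\lambda\bar z_1+(1-\lambda)\bar z_2$ with $\lambda\in L^\infty(\Omega;[0,1])$ and passing to the distributional limit in $\mathcal{A}z_n=0$ gives $(\bar Z_1-\bar Z_2)\nabla\lambda=0$, and the left invertibility of $\bar Z_1-\bar Z_2$ (guaranteed by $\bar z_2-\bar z_1\notin\Lambda_{\mathcal{A}}$ together with $k\geq d$, via Lemma~\ref{lemm:charwavecone}) forces $\lambda$ to be constant. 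The principal difficulty is the second step: the blow-up must preserve both $\mathcal{A}$-freeness \emph{and} the generated Young measure. The constant rank hypothesis is exactly what produces a bounded $\mathcal{A}$-free projector (strongly in $L^p$ for $p>1$, in weak $L^1$ for $p=1$), and equi-integrability is what lets one absorb the resulting truncation/correction error in measure rather than in norm in the delicate $p=1$ case.
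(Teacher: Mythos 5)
Your overall strategy—localize around Lebesgue points, transfer to the torus, and invoke Proposition~\ref{Prop:pisone}—is essentially the route the paper takes. The difference is that the paper simply cites Proposition~3.8 of Fonseca--M\"uller~\cite{FM} as a black box for the localization step, whereas you sketch a re-derivation of that lemma (blow-up, cutoff, and $\mathcal{A}$-free correction via a constant-rank Fourier projector). That re-derivation is where the real difficulty sits: the zero-order projector onto $\ker\mathbb{A}(\xi)$ is not bounded on $L^1$, only of weak type $(1,1)$, so controlling the defect $\psi\tilde z_n-w_n$ for $p=1$ requires precisely the kind of equi-integrability bookkeeping that \cite{FM} carries out; saying the defect ``vanishes in measure'' glosses over the core estimate. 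Unless you intend to reproduce that argument in full, it is cleaner and no less rigorous to cite Proposition~3.8 of~\cite{FM} directly, as the paper does.

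Two further points. First, your justification of the $L^p$ convergence for $1<p<\infty$ is incorrect as stated: $L^p$-boundedness together with a.e.\ convergence does \emph{not} give uniform integrability of $(|z_n|^p)$ (take $z_n=n^{1/p}\mathds{1}_{[0,1/n]}$). The paper sidesteps this by simply citing Theorem~2 of~\cite{CFKW} for the $1<p<\infty$ case, which proves the $L^p$ convergence by different means (a Calder\'on--Zygmund estimate on $\nabla\lambda_n$, without localization). If you want to keep your route, you would instead need to use the decomposition $z_n=e_n+\big(\lambda_n\bar z_1+(1-\lambda_n)\bar z_2\big)$ from Step~1 of Proposition~\ref{Prop:pisone}, where the second summand is pointwise bounded and $e_n\to 0$; that is what actually provides the uniform integrability of $(|z_n|^p)$. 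Second, your final constancy argument—passing $\mathcal{A}z_n=0$ to the $L^1$ limit and using the left inverse of $\bar Z_1-\bar Z_2$ to conclude $\nabla\lambda=0$ on the connected domain $\Omega$—is correct and rather elegant; note that it also applies verbatim when $p=1$, so it gives a slightly stronger conclusion than the statement of the corollary (and a more self-contained argument than the paper's reference to~\cite{CFKW} for the $p>1$ constancy).
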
	
	\begin{rem}
		The constant rank property of $\mathcal{A}$ is used here, as it is required in the localization scheme in Proposition~3.8 from~\cite{FM}.
	\end{rem}
	For convenience we state the aforementioned result from \cite{FM}.
	\begin{prop}[Proposition 3.8 from \cite{FM}]
		Let $1\leq p<+\infty$, and let $\{z_n\}$ be a $p$-equi-integrable sequence in $L^p(\Omega;\R^m)$ such that $\mathcal{A}z_n\rightarrow 0$ in $W^{-1,p}(\Omega)$ if $1<p<\infty$, $\mathcal{A}z_n\rightarrow 0$ in $W^{-1,r}(\Omega)$ for some $r\in (1,d/(d-1))$ if $p=1$, and $ \{z_n\}$ generates the Young measure $\nu=\{\nu_x\}_{x\in\Omega}$. Let $z_n\rightharpoonup z$ in $L^p(\Omega;\R^m)$. Then for a.e. $a\in\Omega$ there exists a sequence $\{\bar{z}_n\}\subset L^p(\T^d;\R^m)\cap \ker\mathcal{A}$ that is $p$-equi-integrable, generates the Young measure $\nu_a$, and satisfies
		\begin{align*}
			\int\limits_{\T^d}^{}\bar{z}_n\dx=\langle \nu_a,\operatorname{id}\rangle=z(a).
		\end{align*}
		In particular, one has
		\begin{align*}
			\langle \nu_a,f\rangle\geq f(\langle \nu_a,\operatorname{id}\rangle)=f(z(a))
		\end{align*}
		for a.e. $a\in\Omega$ and for every continuous $\mathcal{A}$-quasi-convex $f$ that satisfies
		\begin{align*}
			|f(w)|\leq C(1+|w|^p)
		\end{align*}
		for some $C>0$ and all $w\in\R^m$.
	\end{prop}
	\begin{proof}[Proof of Corollary \ref{Cor:pisoneonOmega}]
		The case $1<p<\infty$ is treated in~\cite{CFKW}. Note that the much stronger conclusion of $z_{\infty}$ being constant stems from the fact that we do not need to apply a localization of the generating sequence.\\
		So, we only consider the case $p=1$:\\
		Because the sequence $(z_n)$ is equi-integrable, there exists a subsequence $(z_n)$ and some $z\in L^1\left(\Omega,\R^m\right)$ such that $z_n\rightharpoonup z$ in $L^1\left(\Omega,\R^m\right)$. This sequence $(z_n)$ is therefore equi-integrable, has a weak limit in $L^1$, is $\mathcal{A}$-free, and generates the Young measure $\nu$. Thus, by Proposition~3.8 in~\cite{FM} for\ a.e.\ $a\in \Omega$ there exists an $\mathcal{A}$-free sequence $(\bar{z}_n^a)\in L^1\left(\T^d,\R^m\right)$ generating the homogeneous Young measure $\nu_a$. Since $\operatorname{supp}(\nu_a)\subset \left\{\lambda\bar{z}_1+(1-\lambda)\bar{z}_2\,:\,\lambda\in[0,1] \right\}$, we infer from Proposition~\ref{Prop:pisone} that $\nu_a=\delta_{\lambda_a\bar{z}_1+(1-\lambda_a)\bar{z}_2}$ for some number $\lambda_a\in[0,1]$. Thus, $(z_n)$ generates the Young measure $a\mapsto \nu_a=\delta_{\lambda_a\bar{z}_1+(1-\lambda_a)\bar{z}_2}$. This implies that $z_n$ converges to the function $x\mapsto \delta_{\lambda_x\bar{z}_1+(1-\lambda_x)\bar{z}_2}$ in measure, which by Vitali's convergence theorem and the equi-integrability of $(z_n)$ means that $z_n\rightarrow \left(x\mapsto \lambda_x\bar{z}_1+(1-\lambda_x)\bar{z}_2\right)$ in $L^1(\Omega,\R^m)$.
	\end{proof}
	Now we prove that $\mathcal{A}$-free rigidity holds also for Young measures with support in the line segment between two variable states that are not wave-cone-connected.
	\begin{theo}\label{Thm:rigidity}
		Let $\Omega\subset \R^{d}$ be an open bounded domain, $\mathcal{A}$ a linear homogeneous constant rank differential operator of order one satisfying $k\geq d$, and $1\leq p<\infty$. Further, let $\bar{z}_1,\bar{z}_2\in L^{\infty}\left(\Omega,\R^m\right)$ be such that $\bar{z}_2-\bar{z}_1\notin\Lambda_{\mathcal{A}}$ on a set of positive measure $A\subset\Omega$. Assume $z_n\colon \Omega\mapsto \R^m$ is an equi-integrable family of functions such that
		\begin{align*}
			\|z_n\|_{L^{p}\left(\Omega,\R^m\right)}&\leq c<\infty \text{ and }\\
			\mathcal{A}z_n&\rightarrow 0\text{ in }W^{-1,r}\left(\Omega,\R^m\right)
		\end{align*}
		for some $r\in \left(1,\frac{d}{d-1}\right)$. Further, assume that $(z_n)$ generates a compactly supported Young measure $\nu\in L^{\infty}_w\left(\Omega,\mathcal{M}^1\left(\R^m\right)\right)$ such that
		\begin{align*}
			\operatorname{supp}(\nu_{x})\subset \left\{\lambda \bar{z}_1(x)+(1-\lambda)\bar{z}_2(x)\,:\,\lambda\in [0,1] \right\}
		\end{align*}
		for\ a.e.\ $x\in \Omega$.\\
		Then, for\ a.e.\ $x\in A$ it holds that
		\begin{align*}
			\nu_{x}=\delta_{w(x)}
		\end{align*}
		with $w\in L^1\left(A,\R^m\right)$ and $z_n\rightarrow w$ in $L^1\left(A,\R^m\right)$.
	\end{theo}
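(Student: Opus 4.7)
My strategy is to localize at Lebesgue points of $\bar z_1,\bar z_2$ and thereby reduce matters to the constant-states case already settled in Proposition~\ref{Prop:pisone} and Corollary~\ref{Cor:pisoneonOmega}. For a.e.\ $x_0\in A$, $x_0$ is simultaneously a Lebesgue point of $\bar z_1$ and $\bar z_2$ and a point of approximate continuity of $x\mapsto \nu_x$. The localization result Proposition~3.8 of~\cite{FM}, whose hypotheses match precisely our assumption $\mathcal{A}z_n\to 0$ in $W^{-1,r}$ with $r\in(1,d/(d-1))$ together with the equi-integrability of $(z_n)$, produces at such $x_0$, after rescaling and extraction, an equi-integrable, $\mathcal{A}$-free sequence $(\tilde z_n^{x_0})\subset L^1(\T^d,\R^m)$ generating the homogeneous Young measure $\nu_{x_0}$.

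The next step is to upgrade the $x$-dependent support condition to a constant one at $x_0$: I claim that
\begin{align*}
	\supp \nu_{x_0}\subset K_0:=\{\lambda \bar z_1(x_0)+(1-\lambda)\bar z_2(x_0)\,:\,\lambda\in[0,1]\}.
\end{align*}
To see this, introduce the Carath\'eodory function $F(x,z):=\dist\bigl(z,\{\lambda \bar z_1(x)+(1-\lambda)\bar z_2(x):\lambda\in[0,1]\}\bigr)$, which is bounded on $\supp(\nu)$ by the compact support hypothesis on $\nu$. The support condition on $\nu$ forces $\langle \nu_x,F(x,\cdot)\rangle=0$ for a.e.\ $x$; together with the Lebesgue-point property of $\bar z_i$ and continuity of $F(x_0,\cdot)$ this yields $\langle \nu_{x_0},\dist(\cdot,K_0)\rangle=0$, i.e.\ the claimed inclusion. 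With this, Proposition~\ref{Prop:pisone} applied to $(\tilde z_n^{x_0})$ with the two constant states $\bar z_1(x_0),\bar z_2(x_0)$---non-wave-cone-connected by the choice $x_0\in A$---gives $\nu_{x_0}=\delta_{w(x_0)}$ with $w(x_0)=\bar\lambda(x_0)\bar z_1(x_0)+(1-\bar\lambda(x_0))\bar z_2(x_0)$ for some $\bar\lambda(x_0)\in[0,1]$.

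Finally I would assemble the pointwise information into the global conclusion: $x\mapsto w(x)=\langle \nu_x,\id\rangle$ is measurable, and lies in $L^\infty(A,\R^m)$ because $\nu$ is compactly supported. Since $\nu_x=\delta_{w(x)}$ a.e.\ on $A$, the sequence $(z_n)$ converges to $w$ in measure on $A$, and Vitali's theorem together with equi-integrability upgrades this to convergence in $L^1(A,\R^m)$. The principal subtlety, in contrast to Corollary~\ref{Cor:pisoneonOmega}, is the transfer-of-support step: since the admissible line segment depends only measurably on $x$, its interaction with the blow-up rescaling has to be mediated by a Lebesgue-point argument, and this is where the genuinely new technical work concentrates.
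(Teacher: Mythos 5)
Your proof is correct and follows essentially the same route as the paper: localize via Proposition~3.8 of~\cite{FM} to obtain at a.e.\ $a\in A$ an equi-integrable, $\mathcal{A}$-free sequence on $\T^d$ generating the homogeneous Young measure $\nu_a$, apply Proposition~\ref{Prop:pisone} with constant states $\bar z_1(a),\bar z_2(a)$, then reassemble via convergence in measure and Vitali. However, the step you single out as ``the genuinely new technical work''---the transfer-of-support argument via a Carath\'eodory distance function and Lebesgue points---is actually superfluous. The hypothesis already states $\supp(\nu_x)\subset\{\lambda\bar z_1(x)+(1-\lambda)\bar z_2(x):\lambda\in[0,1]\}$ for a.e.\ $x\in\Omega$, and Proposition~3.8 of~\cite{FM} produces at a.e.\ $a$ a sequence generating precisely the homogeneous measure $\nu_a$ (not some other blown-up object whose support would need re-identification). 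Intersecting the two full-measure sets, one reads off $\supp(\nu_a)\subset\{\lambda\bar z_1(a)+(1-\lambda)\bar z_2(a):\lambda\in[0,1]\}$ directly, which is exactly what the paper does in a single sentence. Your argument is not wrong, but you have manufactured a subtlety where none exists; the real content of the extension from Corollary~\ref{Cor:pisoneonOmega} to Theorem~\ref{Thm:rigidity} is simply that the localization lands you in the constant-state setting with the states evaluated at $a$, and the non-wave-cone-connectedness on $A$ lets Proposition~\ref{Prop:pisone} fire there.
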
	
	\begin{proof}
		After choosing a subsequence, one has $z_n\rightharpoonup z$ in $L^p\left(\Omega,\R^m\right)$ for some $z\in L^p\left(\Omega,\R^m\right)$. Proposition~3.8 in~\cite{FM} provides us for\ a.e.\ $a\in A$ with an equi-integrable sequence $(\bar{z}_n^a)\in L^1\left(\T^{d},\R^m\right)$ which is $\mathcal{A}$-free on $\Omega$ and generates the Young measure ${\nu}_a$. Note that $\operatorname{supp}({\nu}_a)\subset \left\{\lambda \bar{z}_1(a)+(1-\lambda)\bar{z}_2(a)\,:\, \lambda\in[0,1] \right\}$. The latter is a compact set, as $\bar{z}_1,\bar{z}_2$ are bounded\ a.e.\ in $\Omega$. Equi-integrability of $(\bar{z}_n^a)$ implies its $L^1$-boundedness. Therefore, for almost every $a\in A$ Proposition~\ref{Prop:pisone} implies that ${\nu}_a=\delta_{z^a_{\infty}}$ for some $z^a_{\infty}\in \left\{\lambda \bar{z}_1(a)+(1-\lambda)\bar{z}_2(a)\,:\, \lambda\in[0,1] \right\}$ a constant. So, define $w\colon A\to\R^m,\,x\mapsto z^{x}_{\infty}$. It follows that $(z_n)$ converges on $A$ to the function $w$ in measure, since $(z_n)$ generates the atomic Young measure $\nu=\delta_w$ on $A$. Finally, Vitali's convergence theorem and the equi-integrability of $(z_n)$ imply that $z_n\rightarrow w$ in $L^1\left(A,\R^m\right)$.
	\end{proof}
		
	\section{Application to the Two-Dimensional Isentropic Euler System}\label{Sect:application}
	We aim to show that there is a generalized measure-valued solution to the isentropic Euler system on $(0,\infty)\times \R^2$ which is not generated by a sequence of weak solutions and has Lipschitz initial data.\\
	For the following definitions let $\Omega\subset \R^2$ be open and $T>0$. First, the common framework of distributional solutions is introduced.
	\begin{defi}
		A pair $(\rho,u)\in L^1\left([0,T]\times \Omega,\R^+\times\R^2\right)$ is a \textit{weak solution} to (\ref{eq:CEoriginal}) with initial data $(\rho_0,u_0)$ if $(\rho_0,\rho_0u_0)\in L^1\left(\Omega,\R^+\times \R^2\right)$ and
		\begin{align*}
			\int_{0}^{T}\int_{\Omega}^{}\partial_t\psi \rho+\nabla_x\psi\cdot\rho u \dxdt + \int_{\Omega}^{}\psi(0,x)\rho_0(x)\dx&=0,\\
			\int_{0}^{T}\int_{\Omega}^{}\partial_t\varphi\cdot \rho u+\nabla_x\varphi:(\rho u\otimes u)+\operatorname{div}_x\varphi p(\rho) \dxdt+\int_{\Omega}^{}\varphi(0,x)\cdot\rho_0(x)u_0(x)\dx&=0
		\end{align*}
		for all $\psi\in C_c^{\infty}([0,\T)\times\Omega)$ and $\varphi\in C_c^{\infty}\left([0,T)\times\Omega,\R^3 \right)$. The above integrals have to exist as part of the definition.\\
		Moreover, we say that $(\rho, u)$ is a \textit{finite energy weak solution} if $(\rho, u)$ is a weak solution and $(\rho,\sqrt{\rho}u)\in L^{\gamma}\left([0,T]\times \Omega,\R^+\right)\times L^2\left([0,T]\times \Omega,\R^2\right)$.
	\end{defi}
	In particular, a weak solution is a finite energy weak solution in the sense of the definition above, if its energy $E_{(\rho,u)}(t)$ is uniformly bounded in time. Here, the \textit{energy} of a weak solution $(\rho,u)$ to (\ref{eq:CEoriginal}) is defined as
	\begin{align*}
		E_{(\rho,u)}(t):=\frac{1}{2}\int_{\Omega}\rho(t,x)|u(t,x)|^2dx+\frac{1}{\gamma-1}\int_{\Omega}\rho^{\gamma}(t,x)dx
	\end{align*}
	for $t\in[0,T]$.\\
	We now specify the notion of measure-valued solutions, following~\cite{CFKW}. The first concept concerns only oscillations. 
	\begin{defi}\label{Def:MVS}
		A Young measure $\nu\in L^{\infty}_{\operatorname{w}}\left([0,T]\times \Omega,\mathcal{M}^1\left(\R^+\times \R^2\right)\right)$ is a \textit{measure-valued solution} to (\ref{eq:CEoriginal}) with initial data $(\rho_0,u_0)$ if $(\rho_0,\rho_0u_0)\in L^1\left(\Omega,\R^+\times \R^2\right)$ and
		\begin{align*}
			\int_{0}^{T}\int_{\Omega}^{}\partial_t\psi \overline{\rho}+\nabla_x\psi \cdot\overline{\rho u}\dxdt + \int_{\Omega}^{}\psi(0,x)\rho_0(x)\dx&=0,\\
			\int_{0}^{T}\int_{\Omega}^{}\partial_t\varphi\cdot\overline{\rho u}+\nabla_x\varphi:\overline{\rho u\otimes u}+\operatorname{div}_x\varphi\overline{p(\rho)} \dxdt+\int_{\Omega}^{}\varphi(0,x)\cdot\rho_0(x)u_0(x)\dx&=0
		\end{align*}
		for all $\psi\in C_c^{\infty}([0,\T)\times\Omega)$ and $\varphi\in C_c^{\infty}\left([0,T)\times\Omega,\R^3 \right)$. Here, we also introduced the notation $\xi=(\xi_1,\xi')\in\R^+\times \R^2$ and
		\begin{align*}
			\overline{\rho}(t,x):=&\int_{\R^+\times \R^2}\xi_1\dd\nu_{(t,x)}(\xi),\\
			\overline{\rho u}(t,x):=&\int_{\R^+\times \R^2}\sqrt{\xi_1}\xi'\dd\nu_{(t,x)}(\xi),\\
			\overline{\rho u\otimes u}(t,x):=&\int_{\R^+\times \R^2}\xi'\otimes \xi' \dd\nu_{(t,x)}(\xi),\\
			\overline{p(\rho)}(t,x):=&\int_{\R^+\times \R^2}p(\xi_1)\dd\nu_{(t,x)}(\xi).
		\end{align*}
		Again, the above integrals have to exist as part of the definition.\\
		Moreover, we say that a sequence $(\rho_n,u_n)$ of weak solutions to (\ref{eq:CEoriginal}) generates the measure-valued solution $\nu$ if $(\rho_n,\sqrt{\rho_n}u_n)\in L^1\left([0,T]\times \Omega,\R^+\times \R^2\right)$ and $(\rho_n,\sqrt{\rho_n}u_n)$ generates $\nu$ in the sense of Young measures.
	\end{defi}
	The latter type of solution still might not take into account possible effects of concentration, which occur for example for weakly converging sequences in the nonlinear terms of (\ref{eq:CEoriginal}). So, we introduce a generalized version of measure-valued solutions that remembers such effects, following~\cite{GSW15}.
	\begin{defi}\label{Def:gMVS}
		A generalized Young measure
		\begin{align*}
			(\nu,m,\nu^{\infty})\in L^{\infty}_{\operatorname{w}}\left([0,T]\times \Omega,\mathcal{M}^1\left(\R^+\times\R^2\right)\right)\times \mathcal{M}^+([0,T]\times \bar{\Omega})\times L^{\infty}_{\operatorname{w}}\left([0,T]\times \bar{\Omega},m,\mathcal{M}^1\left(\left(\mathbb{S}^{2}_{\gamma,2}\right)^+\right)\right)
		\end{align*}
		is a \textit{generalized measure-valued solution} to (\ref{eq:CEoriginal}) with initial data $(\rho_0,u_0)$ and pressure $p(\rho)=\rho^{\gamma}$ for $\gamma>1$ if $(\rho_0,\rho_0u_0)\in L^1\left(\Omega,\R^+\times \R^2\right)$ and
		\begin{align*}
			\int_{0}^{T}\int_{\Omega}^{}\partial_t\psi\overline{\rho}+\nabla_x\psi \cdot\overline{\rho u}\dxdt + \int_{\Omega}^{}\psi(0,x)\rho_0(x)\dx&=0,\\
			\int_{0}^{T}\int_{\Omega}^{}\partial_t\varphi\cdot\overline{\rho u}+\nabla_x\varphi:\overline{\rho u\otimes u}+\operatorname{div}_x\varphi\overline{\rho^{\gamma}} \dxdt+\int_{\Omega}^{}\varphi(0,x)\cdot\rho_0(x)u_0(x)\dx&=0
		\end{align*}
		for all $\psi\in C_c^{\infty}([0,\T)\times\Omega)$ and $\varphi\in C_c^{\infty}\left([0,T)\times\Omega,\R^3 \right)$. Here, we introduced the notation $\left(\mathbb{S}^{l+m-1}_{p,q}\right)^+:=\left\{(\beta_1,\beta')\in\mathbb{S}_{p,q}^{l+m-1}\,:\,\beta_1\geq 0 \right\}$ and $\lambda=(\lambda_1,\lambda')\in\R^+\times\R^2$ as well as $\beta=(\beta_1,\beta')\in\left(\mathbb{S}^{2}_{\gamma,2}\right)^+$ together with
		\begin{align*}
			\overline{\rho}(t,x):=&\int_{\R^+\times \R^2}\lambda_1\dd\nu_{(t,x)}(\lambda),\\
			\overline{\rho u}(t,x):=&\int_{\R^+\times \R^2}\sqrt{\lambda_1}\lambda'\dd\nu_{(t,x)}(\lambda),\\
			\overline{\rho u\otimes u}(t,x):=&\int_{\R^+\times \R^2}\lambda'\otimes \lambda' \dd\nu_{(t,x)}(\lambda)+\int_{\R^+\times \R^2}\beta'\otimes \beta' \dd\nu^{\infty}_{(t,x)}(\beta)m,\\
			\overline{\rho^{\gamma}}(t,x):=&\int_{\R^+\times \R^2}\lambda_1^{\gamma}\dd\nu_{(t,x)}(\lambda)+\int_{\R^+\times \R^2}\beta_1^{\gamma}\dd\nu^{\infty}_{(t,x)}(\beta)m,\\
		\end{align*}
		where we interpret $m\,\dxdt=\dd m(t,x)$. Again, the above integrals have to exist as part of the definition.\\
		Moreover, we say that a sequence $(\rho_n,u_n)$ of finite energy weak solutions to (\ref{eq:CEoriginal}) generates the generalized measure-valued solution $(\nu,m,\nu^{\infty})$ if $(\rho_n,\sqrt{\rho_n}u_n)\in L^{\gamma}\left([0,T]\times \Omega,\R^+\right)\times L^2\left([0,T]\times \Omega,\R^2\right)$ and $(\rho_n,\sqrt{\rho_n}u_n)$ generates $(\nu,m,\nu^{\infty})$ in the sense of generalized Young measures with respect to the function space $\mathcal{F}_{\gamma,2}$.
	\end{defi}
	It will be clear from the context in what sense the overlined variables have to be understood.\\
	Note that the $\gamma$-$2$-recession functions of $(\beta\mapsto \beta_1)$ and $\left(\beta\mapsto\sqrt{\beta_1}\beta'\right)$ are zero. Hence, we left out the corresponding concentration terms in the above definition.
	\begin{rem}
		We want to clarify here that a (generalized) measure-valued solution to the compressible Euler system is not necessarily by definition generated by a sequence of weak solutions. In fact, the purpose of the current contribution is to show the opposite.
	\end{rem}
	Let us also give a definition of what we mean by a vanishing viscosity limit coming from the compressible Navier-Stokes equations.
	\begin{defi}\label{Def:vanishingviscosity}
		We say a sequence $(\rho_n,u_n)\subset L^1\left((0,T)\times \Omega,\R^+\times \R^2\right)$ is a \textit{vanishing viscosity sequence} if $(\rho_n,u_n)$ is a distributional solution of the compressible Navier-Stokes system
		\begin{align}
			\begin{split}
				\partial_t(\rho_nu_n)+\operatorname{div}(\rho_n u_n\otimes u_n)+\nabla \rho_n^{\gamma}&=\mu_n\operatorname{div}\mathbb{S}(\nabla u_n),\\ \label{eq:CNSE}
				\partial_t \rho_n+\operatorname{div}(\rho_n u_n)&=0
			\end{split}
		\end{align}
		with initial data $(\rho_{n,0},u_{n,0})$ satisfying the energy inequality for all $n\in \N$. Here, $(\mu_n)\subset \R^+$ is a null sequence, $(\rho_{n,0})$ converges weakly in $L^{\gamma}\left(\Omega,\R^+\right)$, $\underset{n\in\N}{\inf}\rho_{n,0}\geq c>0$, and $\sqrt{\rho_{n,0}}u_{n,0}$ converges weakly in $L^2\left(\Omega,\R^2\right)$. We also introduced the viscosity stress tensor $\mathbb{S}(\nabla u):=\eta\left(\nabla u+\nabla^T u\right)+\lambda (\operatorname{div}u)\mathbb{E}_d$ with $\eta >0$ and $\lambda\geq -\eta$.\\
		We say a vanishing viscosity sequence $(\rho_n,u_n)$ generates a measure-valued solution $\nu$ if $\left(\rho_n,\sqrt{\rho_n}u_n\right)$ generates the Young measure $\nu$. Moreover, we say a vanishing viscosity sequence $(\rho_n,u_n)$ generates a generalized measure-valued solution $(\nu,\mu,\nu^{\infty})$ if $\left(\rho_n,\sqrt{\rho_n}u_n\right)$ generates the generalized Young measure $(\nu,\mu,\nu^{\infty})$ with respect to the function space $\mathcal{F}_{\gamma,2}$.
	\end{defi}
\begin{rem}\label{Rem:vanishingviscosity}
	Note that for example by~\cite{FNP} for every initial data $(\rho_{n,0}, u_{n,0})$ there exists a weak solution to (\ref{eq:CNSE}) satisfying the energy inequality. Thus, a posteriori it holds that $\left(\rho_n,\sqrt{\rho_n}u_n\right)\subset L^{\gamma}\left([0,T]\times \Omega,\R^+\right)\times L^2\left([0,T]\times \Omega,\R^2\right)$.
\end{rem}
	Now rewrite (\ref{eq:CEoriginal}) via the substitution $m:=\rho u$, $U:=\frac{m\otimes m}{\rho}-\frac{|m|^2}{2\rho}\mathbb{E}_2$, $q:=p(\rho)+\frac{|m|^2}{2\rho}$ into a linear system of PDEs
	\begin{align}
		\begin{split}
			\partial_t\rho+\operatorname{div}m&=0,\\
			\partial_tm+\operatorname{div}U+\nabla q&=0\label{eq:CEsubsolution}.
		\end{split}
	\end{align}
	Note that $\operatorname{tr}U=\frac{|m|^2}{\rho}-\frac{|m|^2}{2 \rho}2=0$ and $U$ is symmetric. The equation (\ref{eq:CEsubsolution}) is treated as a linear system of first order PDEs with unknowns $(\rho,m,U,q)\colon (0,T)\times \Omega\to \R^+\times \R^2\times S^2_0\times \R^+$.\\
	Henceforth, we confine ourselves to $\Omega=\R^2$ and the case $p(\rho)=\rho^2$,\ i.e.\ we consider the Euler system
	\begin{align}
		\begin{split}
			\partial_t\rho+\operatorname{div}(\rho u)&=0,\\
			\partial_t(\rho u)+\operatorname{div}(\rho u\otimes u)+\nabla \rho^2&=0\label{eq:CEpis2}
		\end{split}
	\end{align}
	on the domain $[0,\infty)\times \R^2$ (or also on $[-T,\infty)\times \R^2$ for $T>0$).\\
	For the isentropic Euler system on $\R^2$ we define a property of solutions which corresponds to the entropy inequality in the sense of hyperbolic conservation laws and usually appears in the context of weak-strong uniqueness. We follow~\cite{CDK} and~\cite{BDS}.
	\begin{defi}
		A bounded weak solution $(\rho,u)\in L^{\infty}\left((0,T)\times \R^2, \R^+\times\R^2\right)$ of (\ref{eq:CEpis2}) with initial data $(\rho_0,u_0)$ is called \textit{admissible} if
		\begin{align*}
			\partial_t\left(\rho\frac{|u|^2}{2}+\rho^2 \right)+\operatorname{div}\left(\left(\rho\frac{|u|^2}{2}+2\rho^2 \right)u \right)\leq &0
		\end{align*}
		holds in the sense of distributions,\ i.e.\ tested against functions in $C_c^{\infty}\left([0,T)\times \R^2,\R^3\right)$.\\
		\\
		We say a (generalized) measure-valued solution $\nu$ (respectively $(\nu,m,\nu^{\infty})$) on $[0,T]\times \R^2$ is \textit{admissible} if
		\begin{align*}
			\partial_t\left(\frac{1}{2}\overline{\rho|u|^2}+\overline{\rho^2}\right)+\operatorname{div}\left(\left(\frac{1}{2}\overline{\rho|u|^2}+2\overline{\rho^2} \right)u \right)\leq &0
		\end{align*}
		holds in the sense of distributions. Here we also introduced
		\begin{align*}
			\overline{\rho |u|^2}(t,x):=&\int_{\R^+\times \R^2}|\xi'|^2 \dd\nu_{(t,x)}(\xi), \text{ respectively},\\
			\overline{\rho |u|^2}(t,x):=&\int_{\R^+\times \R^2}|\lambda'|^2 \dd\nu_{(t,x)}(\lambda)+\int_{\R^+\times \R^2}|\beta'|^2\dd\nu^{\infty}(\beta)m.
		\end{align*}
		Depending on the context the correct meaning of the overlined terms will be clear.
	\end{defi}	
	Our next task is to rewrite (\ref{eq:CEsubsolution}) in the form $\mathcal{A}z=0$, where $\mathcal{A}=\sum_{l=1}^{d}A^{l}\partial_{l}$. To that end, we introduce the state vector $z:=(\rho,m,U,q)$ for the unknowns of equation (\ref{eq:CEsubsolution}) and the homogeneous linear differential operator $\mathcal{A}_L$. Note that $U_{21}=U_{12}$ and $U_{22}=-U_{11}$ due to symmetry and tracelessness of $U$. We set
	\begin{align*}
		A_L^t:=A^{1}:=&\begin{pmatrix}
		1&0&0&0&0&0&0&0\\
		0&1&0&0&0&0&0&0\\
		0&0&1&0&0&0&0&0
		\end{pmatrix},\\
		A_L^{x_1}:=A^{2}:=&\begin{pmatrix}
		0&1&0&0&0&0&0&0\\
		0&0&0&1&0&0&0&1\\
		0&0&0&0&1&0&0&0
		\end{pmatrix},\\
		A_L^{x_2}:=A^{3}:=&\begin{pmatrix}
		0&0&1&0&0&0&0&0\\
		0&0&0&0&1&0&0&0\\
		0&0&0&-1&0&0&0&1
		\end{pmatrix}.
	\end{align*}
	Note that this defines a constant rank operator $\mathcal{A}_L$, cf.\ Lemma~1 in~\cite{CFKW}. Moreover, we have $k=8\geq 3=m$.\\
	Now for a state vector $z=(\rho,m,U,q)$ the corresponding capital letter reads
	\begin{align*}
		Z=\left(\sum_{j=1}^{8}{A_L}^{l}_{ij}z_j\right)_{il}=\begin{pmatrix}
		\rho&m_1&m_2\\
		m_1&U_{11}+q&U_{12}\\
		m_2&U_{12}&-U_{11}+q
		\end{pmatrix}.
	\end{align*}
	Further, let us introduce the lift map $Q$ by
	\begin{align*}
		Q_{\gamma}\colon \R^+\times \R^2&\to\R^+\times \R^2\times S^2_0\times \R^+\subset \R^8,\\
		(\xi_1,\xi')&\mapsto\left(\xi_1,\sqrt{\xi_1}\xi',\xi'\otimes\xi'-\frac{|\xi'|^2}{2}\mathbb{E}_2,\xi_1^{\gamma}+\frac{|\xi'|^2}{2}\right). 
	\end{align*}
	It can be checked right away that $Q_{\gamma}\left(\rho,\sqrt{\rho}u\right)$ is a solution to (\ref{eq:CEsubsolution}) in the sense of distributions if $(\rho, u)$ is a weak solution to (\ref{eq:CEoriginal}). We set $Q:=Q_2$ as our case of interest is $\gamma=2$.\\
	We are now ready to prove the main step in the construction of our non-generable measure-valued solutions.
	\begin{prop}\label{Prop:Chiodaroli}
		Let $T>0$. There exists Lipschitz initial data $(\rho_{-T},u_{-T})\colon \R^2\to \R^+\times \R^2$ which gives rise to infinitely many pairs of  admissible weak solutions $(\rho,u),(\tilde{\rho},\tilde{u})\colon (-T,\infty)\times \R^2\to\R^+\times \R^2$ to (\ref{eq:CEpis2}) with $\inf\rho,\inf\tilde{\rho}>0$ and $(\rho,u),(\tilde{\rho},\tilde{u})\in L^{\infty}\left((-T,\infty),\R^+\times \R^2\right)$. These weak solutions are locally Lipschitz and coincide up to time $0$. Moreover, for each such pair of weak solutions there exists an open set $A\subset (0,\infty)\times\R^2$ such that for the corresponding lifted states $z:=Q\left(\rho,\sqrt{\rho}u\right)$ and $\tilde{z}:=Q\left(\tilde{\rho},\sqrt{\tilde{\rho}}\tilde{u}\right)$ holds $z(t,x)-\tilde{z}(t,x)\neq 0$ and $z(t,x)-\tilde{z}(t,x)\notin \Lambda_{\mathcal{A}_L}$ for\ a.e.\ $(t,x)\in A$.
	\end{prop}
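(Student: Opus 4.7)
The plan is to obtain the desired infinitely many pairs of weak solutions by combining the compression-wave convex-integration construction of Chiodaroli, De Lellis and Kreml \cite{CDK} with a direct algebraic analysis of the wave-cone condition on the lifts.

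First, I would invoke the main result of \cite{CDK} to produce Lipschitz initial data $(\rho_{-T},u_{-T})$ whose classical Lipschitz compression-wave solution on $[-T,0]$ develops a shock at $t=0$. After the shock forms, the CDK machinery yields, for each admissible middle density $\rho_M$ ranging over an open interval $I\subset\R^+$, an entire infinite family of admissible bounded weak solutions on $(-T,\infty)\times\R^2$ that coincide with the classical solution on $(-T,0]\times\R^2$ and have density bounded uniformly away from $0$ and $\infty$. Each such solution is locally Lipschitz outside a fan-shaped open region emerging from the origin; inside this region, it has constant density $\rho_M$ and velocity generated by convex integration subject to a pointwise modulus constraint.

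Next, I would analyse the wave-cone condition algebraically. By Lemma~\ref{lemm:charwavecone}, $z-\tilde z\in\Lambda_{\mathcal{A}_L}$ is equivalent to $\det(Z-\tilde Z)=0$, where the $3\times 3$ matrix associated to a lifted state is
\begin{align*}
Z=\begin{pmatrix} \rho & \rho u_1 & \rho u_2 \\ \rho u_1 & \rho u_1^2+\rho^2 & \rho u_1 u_2 \\ \rho u_2 & \rho u_1 u_2 & \rho u_2^2+\rho^2 \end{pmatrix}.
\end{align*}
A direct cofactor expansion shows that whenever $\rho=\tilde\rho$ pointwise, $\det(Z-\tilde Z)\equiv 0$ as a polynomial in $(u,\tilde u)$, reflecting the fact that pure velocity perturbations at fixed density always land in the wave cone of $\mathcal{A}_L$. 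Consequently, a candidate pair must be drawn from families with distinct middle densities $\rho_M\neq\tilde\rho_M$. For such a choice, the same determinant becomes a genuinely nontrivial polynomial in $(u,\tilde u)$, and its zero set is a proper algebraic subvariety of measure zero in velocity space.

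The final step is to pick the intersection open set $A$. Since CDK's convex-integration scheme produces velocities taking a continuum of pointwise values on the fan, one can select, within the $\rho_M$- and $\tilde\rho_M$-families, two particular solutions so that on an open sub-wedge $A$ the pair $(u(t,x),\tilde u(t,x))$ avoids the measure-zero zero locus of $\det(Z-\tilde Z)$ for a.e.\ $(t,x)\in A$. Then $z-\tilde z\notin\Lambda_{\mathcal{A}_L}$ and $z-\tilde z\neq 0$ a.e.\ on $A$. Varying $\rho_M\neq\tilde\rho_M$ in $I$ and the convex-integration data within each family yields infinitely many such pairs. The main obstacle is the two quantitative inputs from \cite{CDK} on which this selection rests: first, that the compression-wave initial data genuinely admits an open interval of middle densities $\rho_M$ (not just a single value); and second, that the convex-integration solutions attain enough pointwise variety of velocities on the fan to allow avoidance of the measure-zero bad set $\{\det(Z-\tilde Z)=0\}$ a.e.\ on an open sub-wedge. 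Both points should follow from a careful reading of the admissibility analysis and the $h$-principle-type density of the convex-integration scheme in \cite{CDK}, but they are the essential extra input going beyond the purely algebraic observation above.
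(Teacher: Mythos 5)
Your algebraic reduction is correct and matches the paper's: $z-\tilde z\in\Lambda_{\mathcal{A}_L}$ iff $\det(Z-\tilde Z)=0$, and this determinant vanishes identically when $\rho=\tilde\rho$, so the two CDK solutions must be chosen with distinct intermediate densities. You also correctly recognise that for fixed constant $\rho\neq\tilde\rho$ the bad set in $(u,\tilde u)$-space is a proper algebraic hypersurface. However, the crucial step --- showing that the actual convex-integration velocities $(u(t,x),\tilde u(t,x))$ avoid this hypersurface a.e.\ on an open set --- is a genuine gap, and the ``$h$-principle density'' heuristic you invoke does not close it. The CDK scheme gives no pointwise or Sard-type control on the velocity directions; a wild, oscillatory convex-integration solution could in principle concentrate on the measure-zero set of a fixed polynomial constraint. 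Nothing in \cite{CDK} rules this out, and it is not the kind of conclusion the $h$-principle provides.

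The paper sidesteps the avoidance problem entirely by exploiting the one piece of hard pointwise information the convex integration \emph{does} provide: on the fan region $P_1$ the modulus is constant, $|u|^2=C_1$ a.e.\ (Lemma~3.7 of \cite{CDK}), and likewise $|\tilde u|^2=\tilde C_1$ on $\tilde P_1$. For constant $\rho\neq\tilde\rho$, the determinant vanishes iff
\[
|u-\tilde u|^2=\frac{(\rho+\tilde\rho)(\rho-\tilde\rho)^2}{\rho\tilde\rho},
\]
and the reverse triangle inequality gives $|u-\tilde u|\geq\bigl||u|-|\tilde u|\bigr|=\bigl|\sqrt{C_1}-\sqrt{\tilde C_1}\bigr|$. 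So once one perturbs the CDK fan parameters (the paper replaces $\nu_+=0$ by $\nu_+=\eta<0$, which forces $\rho_1\to\tilde\rho_1$) and then verifies one can still choose $C_1,\tilde C_1$ in the allowed ranges so that
\[
\bigl|\sqrt{C_1}-\sqrt{\tilde C_1}\bigr|^2>\frac{(\rho_1+\tilde\rho_1)(\rho_1-\tilde\rho_1)^2}{\rho_1\tilde\rho_1},
\]
the wave-cone condition is violated \emph{pointwise everywhere} on $P_1\cap\tilde P_1$, with no measure-theoretic avoidance argument at all. Your proposal should be repaired by replacing the ``avoid a measure-zero set'' step with this hard-constraint-plus-reverse-triangle-inequality argument, and by replacing the assertion that CDK gives an open interval of middle densities with an explicit verification (as in the paper) that the fan-subsolution admissibility inequalities remain solvable after the perturbation producing $\tilde\rho_1\neq\rho_1$.
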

	\begin{proof}
		By Corollary~\ref{lemm:charwavecone} it suffices to show that $z(t,x)-\tilde{z}(t,x)\neq 0$ and $\det(Z(t,x)-\tilde{Z}(t,x))\neq 0$ for all $(t,x)\in A\subset (0,\infty)\times\R^2$ for some open set $A$. If $\rho,\tilde{\rho}$ are constant and not equal, then $z\neq \tilde{z}$, and the determinant condition reads
		\begin{align*}
			&\det(Z-\tilde{Z})\\
			=&\left|\begin{pmatrix}
			\rho-\tilde{\rho}&\rho u_1-\tilde{\rho}\tilde{u}_1&\rho u_2-\tilde{\rho}\tilde{u}_2\\
			\rho u_1-\tilde{\rho}\tilde{u}_1&\rho u_1^2+\rho^2-\tilde{\rho}\tilde{u}_1^2-\tilde{\rho}^2&\rho u_1u_2-\tilde{\rho}\tilde{u}_1\tilde{u}_2\\
			\rho u_2-\tilde{\rho}\tilde{u}_2&\rho u_1u_2-\tilde{\rho}\tilde{u}_1\tilde{u}_2&\underbrace{-\rho u_1^2+\rho^2+|u|^2\rho+\tilde{\rho}\tilde{u}_1^2-\tilde{\rho}^2-|\tilde{u}|^2\tilde{\rho}}_{=\rho u_2^2-\tilde{\rho}\tilde{u}_2^2+\rho^2-\tilde{\rho}^2}
			\end{pmatrix} \right|\\
			=&\left(\rho^2-\tilde{\rho}^2\right)\left(-\rho\tilde{\rho}\left((u_1-\tilde{u}_1)^2+(u_2-\tilde{u}_2)^2\right)+\left(\rho^2-\tilde{\rho}^2\right)(\rho-\tilde{\rho})\right)\overset{!}{=}0,
		\end{align*}
		which follows by a tedious computation. Since we assumed $\rho\neq \tilde{\rho}$ to be positive constants, the above is equivalent to the condition
		\begin{align}
			|u-\tilde{u}|^2=(u_1-\tilde{u}_1)^2+(u_2-\tilde{u}_2)^2\overset{!}{=}\frac{(\rho+\tilde{\rho})(\rho-\tilde{\rho})^2}{\rho\tilde{\rho}}\label{eq:weirdwaveconecond}.
		\end{align}
		So, we seek for two weak solutions $(\rho,u),(\tilde{\rho},\tilde{u})$ with positive constant densities $\rho\neq \tilde{\rho}$ which fulfill (\ref{eq:weirdwaveconecond}) almost nowhere on an open set $A$.\\
		The construction of these two solutions will be provided by~\cite{CDK}. In particular, we will choose a shock as initial datum $(\rho_0,u_0)$ which possesses two admissible fan subsolutions (in the sense of~\cite{CDK}). One can then infer admissible weak solutions to (\ref{eq:CEpis2}) with special properties from these two subsolutions. In detail:\\
		Choose
		\begin{align*}
			(\rho_0,u_0):=\begin{cases}
			(\rho_-,u_-)\,,\,x_2<0,\\
			(\rho_+,u_+)\,,\,x_2>0
			\end{cases}
		\end{align*}
		with
		\begin{align*}
			\rho_-:=&1,\\
			\rho_+:=&4,\\
			u_-:=&\left(-\frac{1}{\rho_+},2\sqrt{2}\left(\sqrt{\rho_+}-\sqrt{\rho_-}\right)\right)=\left(-\frac{1}{4},2\sqrt{2}\right),\\
			u_+:=&\left(-\frac{1}{\rho_+},0\right)=\left(-\frac{1}{4},0\right).
		\end{align*}
		Then, by Lemma~6.1 in~\cite{CDK} the shock $(\rho_0,u_0)$ arises from a classical solution with Lipschitz initial data in the sense that there exists $(\rho^c,u^c)\in W^{1,\infty}_{\operatorname{loc}}\cap L^{\infty}\left((-\infty,0)\times\R^2,\R^+\times\R^2\right)$ such that $(\rho^c,u^c)$ solves (\ref{eq:CEpis2}) in the classical sense and $(\rho^c(t,\cdot),u^c(t,\cdot))\overset{t\rightarrow 0^-}{\rightarrow} (\rho_0,u_0)$ almost everywhere. Set $(\rho_{-T},u_{-T}):=(\rho^c(-T,\cdot),u^c(-T,\cdot))$.\\
		Let us now choose the two subsolutions. This means, we choose a triple $(\bar{\rho},\bar{u},\bar{w})\colon (0,\infty)\times \R^2\to \R^+\times \R^2\times S^{2\times 2}_0$ of piecewise constant functions. Our particular choices will be of the form
		\begin{align*}
			(\bar{\rho},\bar{u},\bar{w})=&\left(\rho_-,u_-,u_-\otimes u_--\frac{1}{2}|u_-|^2\mathbb{E}_2\right)\mathds{1}_{P_-}+(\rho_1,u_1,w_1)\mathds{1}_{P_1}\\
			&+\left(\rho_+,u_+,u_+\otimes u_+-\frac{1}{2}|u_+|^2\mathbb{E}_2\right)\mathds{1}_{P_+}
		\end{align*}
		with $P_-,P_1,P_+\subset (0,\infty)\times \R^2$ of the form
		\begin{align*}
			P_-=&\{(t,x)\,:\,t>0\textup{ and }x_2<\nu_-t \},\\
			P_1=&\{(t,x)\,:\,t>0\textup{ and }\nu_-t<x_2<\nu_+t \},\\
			P_-=&\{(t,x)\,:\,t>0\textup{ and }x_2>\nu_+t \},
		\end{align*}
		where $\nu_-,\nu_+\in\R$ with $\nu_-<\nu_+$ and
		\begin{align*}
			u_1\otimes u_1-w_1<\frac{C_1}{2}\mathbb{E}_2
		\end{align*}
		for some $C_1>0$. We introduce $\alpha,\beta,\gamma,\delta\in \R$ by
		\begin{align*}
			u_1=&(\alpha,\beta),\\
			w_1=&\begin{pmatrix}
			\gamma&\delta\\
			\delta&-\gamma
			\end{pmatrix}.
		\end{align*}
		Proposition~5.1 in~\cite{CDK} gives the following characterization: The triple $(\bar{\rho},\bar{u},\bar{w})$ is an admissible fan subsolution in the sense of~\cite{CDK} with the specific choice $\rho_+=4$ and $\rho_-=1$ if and only if
		\begin{align}
			\nu_-(1-\rho_1)=&2\sqrt{2}-\rho_1\beta,\label{eq:cdkcondanfang}\\
			\nu_-\left(-\frac{1}{4}-\rho_1\alpha\right)=&-\frac{1}{\sqrt{2}}-\rho_1\delta,\\
			\nu_-(2\sqrt{2}-\rho_1\beta)=&8+\rho_1\gamma+1-\rho_1^2-\rho_1\frac{C_1}{2},\\
			\nu_+(\rho_1-4)=&\rho_1\beta-0,\\
			\nu_+(\rho_1\alpha-(-1))=&\rho_1\delta-0,\\
			\nu_+(\rho_1\beta-0)=&-\rho_1\gamma-0+\rho_1^2-16+\rho_1\frac{C_1}{2},\\
			\alpha^2+\beta^2<&C_1,\\
			0<&\left(\frac{C_1}{2}-\alpha^2+\gamma\right)\left(\frac{C_1}{2}-\beta^2-\gamma\right)-(\delta-\alpha\beta)^2,\\
			\nu_-\left(1-\rho_1^2\right)+\nu_-\left(\frac{\left(\frac{1}{16}+8\right)}{2}-\rho_1\frac{C_1}{2}\right)\leq &(1+1)2\sqrt{2}-\left(\rho_1^2+\rho_1^2\right)\beta+\sqrt{2}\left(\frac{1}{16}+8\right)-\rho_1\beta\frac{C_1}{2},\\
			\nu_+\left(\rho_1^2-16\right)+\nu_+\left(\rho_1\frac{C_1}{2}-\frac{1}{8}\right)\leq &\left(\rho_1^2+\rho_1^2\right)\beta-0+\rho_1\beta\frac{C_1}{2}-0,\label{eq:cdkcondende}
		\end{align}
		where we used $p(\rho)=\rho^2$,\ i.e.\ $\varepsilon(\rho)=\rho$.\\
		In the proof of Lemma~6.2 of~\cite{CDK} they give an explicit choice of numbers satisfying these conditions, namely
		\begin{align*}
			\beta:=&0,\\
			\delta:=&0,\\
			\nu_+:=&0,
		\end{align*}
		which yields the constraints
		\begin{align*}
			\alpha=&-\frac{1}{4},\\
			\nu_-=&-\frac{7}{2\sqrt{2}},\\
			\rho_1=&\frac{15}{7},\\
			\frac{9049}{1680}<&C_1\leq\frac{11497}{1680},\\
			\gamma=&\frac{C_1}{2}-\frac{559}{105}.
		\end{align*}
		To come up with another admissible fan subsolution we perturb the above choice from~\cite{CDK} slightly. To be precise, we start with defining
		\begin{align*}
			\tilde{\alpha}=&-\frac{1}{4},\\
			\tilde{\nu}_+:=&\eta
		\end{align*}
		for some $\eta<0$. This yields,
		\begin{align*}
			\tilde{\rho}_1=&\frac{15+16\sqrt{2}\eta+12\eta^2}{7+4\sqrt{2}\eta+3\eta^2},\\
			\tilde{\beta}=&\frac{\eta(\tilde{\rho}_1-4)}{\tilde{\rho}_1},\\
			\tilde{\delta}=&-\frac{1}{4}\tilde{\beta},\\
			\tilde{\nu}_-=&-\frac{14\sqrt{2}+29\eta+6\sqrt{2}\eta^2}{(3\eta+2\sqrt{2})^2},\\
			\tilde{\gamma}=&\tilde{\rho}_1-\frac{16}{\tilde{\rho}_1}+\frac{\tilde{C}_1}{2}-\eta\tilde{\beta}.
		\end{align*}
		For all $\eta\in (\mu,0)$ there exists $\tilde{C}_1$ such that the inequalities (4.11)-(4.14) can be satisfied. Here, $\mu\approx-0.2249$ is determined as the first negative value at which the dominating inequalities (4.12) and (4.13) contradict each other. If $|\eta|$ is sufficiently small, then we can choose some $\tilde{C}_1$ in approximately the same interval as for $C_1$.\\
		Now we use the fact that $C_1$ and $\tilde{C}_1$ can be chosen within some range. In particular, to conclude the proof, it suffices to choose $\eta\in\left(\mu,0\right)$ and $C_1,\tilde{C}_1$ such that
		\begin{align}
			\sqrt{\frac{(\rho_1+\tilde{\rho}_1)(\rho_1-\tilde{\rho}_1)^2}{\rho_1\tilde{\rho}_1}}<\left|\sqrt{C_1}-\sqrt{\tilde{C}_1}\right|.\label{eq:ccond}
		\end{align}
		Indeed, for the subsolution $(\bar{\rho},\bar{u},\bar{w})$ we can infer (infinitely many) admissible weak solutions $(\rho,u)$ to (\ref{eq:CEpis2}) by Proposition~3.6 in~\cite{CDK} with $\rho=\bar{\rho}$. This is done with the help of Lemma~3.7 in~\cite{CDK}, which gives us
		\begin{align*}
			|u|^2=\operatorname{tr}(u\otimes u)=\operatorname{tr}(U)+\frac{C_1}{2}\operatorname{tr}(\mathbb{E}_2)=C_1
		\end{align*}
		a.e.\ on $P_1$ for some $S^{2\times 2}_0$-valued function $U$. Analogously, for the subsolution $(\bar{\tilde{\rho}},\bar{\tilde{u}},\bar{\tilde{w}})$ we obtain (infinitely many) admissible weak solutions $(\tilde{\rho},\tilde{u})$ with $\tilde{\rho}=\bar{\tilde{\rho}}$ and
		\begin{align*}
			|\tilde{u}|^2=\tilde{C}_1
		\end{align*}
		a.e.\ on $\tilde{P}_1$. Note that for all $\eta\in\left(\mu,0\right)$ we have $\tilde{\nu}_-<-\frac{7}{2\sqrt{2}}$. Hence, on
		\begin{align*}
			P_1\cap \tilde{P}_1=&\{(t,x)\,:\,t>0\textup{ and }\nu_-t<x_2<\tilde{\nu}_+t \}\\
			=&\left\{(t,x)\,:\,t>0\textup{ and }-\frac{7}{2\sqrt{2}}t<x_2<\eta t \right\}
		\end{align*}
		we obtain by (\ref{eq:ccond}) that
		\begin{align*}
			|u-\tilde{u}|^2\geq \big||u|-|\tilde{u}|\big|^2=\left|\sqrt{C_1}-\sqrt{\tilde{C}_1}\right|^2>\frac{(\rho_1+\tilde{\rho}_1)(\rho_1-\tilde{\rho}_1)^2}{\rho_1\tilde{\rho}_1}
		\end{align*}
		holds almost everywhere. Thus, (\ref{eq:weirdwaveconecond}) is violated\ a.e.\ on $A:=P_1\cap\tilde{P}_1$. Note that $\nu_-=-\frac{7}{2\sqrt{2}}<\mu$, hence $P_1\cap \tilde{P}_1$ is open for all possible choices of $\eta$. It also follows that $\tilde{\nu}_-<\tilde{\nu}_+$.\\
		Therefore, it remains to give a specific choice of $\left(\mu,0\right)\ni \eta$, $C_1$, and $\tilde{C}_1$ such that (\ref{eq:ccond}) holds. For example, choose
		\begin{align*}
			\eta=&-0.001,\\
			C_1=&6,\\
			\tilde{C}_1=&5.8.
		\end{align*}
		One can check that this choice indeed fulfills the required conditions (\ref{eq:cdkcondanfang})-(\ref{eq:cdkcondende}) and (\ref{eq:ccond}). This finishes the proof.\\
		Note that the above choices of $\eta$, $C_1$, and $\tilde{C}_1$ can be made within some interval. Thus, we obtain infinitely many such choices of subsolutions.
	\end{proof}
	\begin{rem}
		For two weak solutions $(\rho,u)$ and $(\rho, \tilde{u})$ with equal density, the corresponding lifted states are always wave-cone-connected. Indeed, we have
		\begin{align*}
			&Z(t,x)-\tilde{Z}(t,x)\\
			=&\begin{pmatrix}
			\rho&\rho u_1&\rho u_2\\
			\rho u_1&\rho u_1u_1-\frac{|u|^2}{2}\rho+\rho^2+\frac{|u|^2}{2}\rho&\rho u_1u_2\\
			\rho u_2&\rho u_1u_2&-\left(\rho u_1u_1-\frac{|u|^2}{2}\rho\right)+\rho^2+\frac{|u|^2}{2}\rho
			\end{pmatrix}(t,x)-\tilde{Z}(t,x)\\
			=&\rho\begin{pmatrix}
			0& (u_1-\tilde{u}_1)& (u_2-\tilde{u}_2)\\
			(u_1-\tilde{u}_1)& \left(u_1^2-\tilde{u}_1^2\right)& (u_1u_2-\tilde{u}_1\tilde{u}_2)\\
			(u_2-\tilde{u}_2)& (u_1u_2-\tilde{u}_1\tilde{u}_2)&\underbrace{- \left(u_1^2-\tilde{u}_1^2\right)+|u|^2-|\tilde{u}|^2}_{=u_2^2-\tilde{u}_2^2}
			\end{pmatrix}(t,x)
		\end{align*}
		Thus, $\det\left(Z(t,x)-\tilde{Z}(t,x)\right)=0$ trivially holds, which is an elementary computation. Note that this holds in particular for $|u|\neq|\tilde{u}|$. 
	\end{rem}	
	We now combine the pairs of states corresponding to the above compression wave with our previous rigidity result Theorem~\ref{Thm:rigidity}.
	\begin{theo}\label{Thm:classicalYM}
		There exists Lipschitz initial data $(\rho_{-T},u_{-T})\colon \R^2\to \R^+\times \R^2$ which gives rise to infinitely many admissible measure-valued solutions to the isentropic Euler system (\ref{eq:CEpis2}) of the form $\nu=\lambda \delta_{\left(\rho,\sqrt{\rho}u\right)}+(1-\lambda)\delta_{\left(\tilde{\rho},\sqrt{\tilde{\rho}}\tilde{u}\right)}$, where $(\rho,u),(\tilde{\rho},\tilde{u})$ are as in Proposition~\ref{Prop:Chiodaroli} with corresponding open set $A=\left\{(t,x)\,:\,t>0\textup{ and }-\frac{7}{2\sqrt{2}}t<x_2<-0.001 t \right\}$ on which the lifted states are not wave-cone-connected, and $\lambda\in(0,1)$. Such a solution $\nu$ coincides with a classical compression wave up to time $0$ and moreover $\nu$ cannot be generated by sequences of weak solutions or vanishing viscosity sequences $(\rho_n,u_n)$ satisfying the following property:\\
		There exists an open and bounded set $B\subset A$ such that the sequences $\left(\rho_n|u_n|^2\right)$ and $\left(\rho_n^2\right)$ are equi-integrable on $B$.
	\end{theo}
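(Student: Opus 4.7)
\emph{Construction.} From Proposition~\ref{Prop:Chiodaroli}, take a pair $(\rho,u),(\tilde\rho,\tilde u)$ of admissible weak solutions sharing the Lipschitz initial datum $(\rho_{-T},u_{-T})$, coinciding with the classical compression wave on $(-T,0)$, and differing on an open set $A\subset(0,\infty)\times\R^2$ on which the lifted states $z:=Q(\rho,\sqrt{\rho}u)$ and $\tilde z:=Q(\tilde\rho,\sqrt{\tilde\rho}\tilde u)$ are not wave-cone-connected. For each $\lambda\in(0,1)$ I would set
\[
\nu_{(t,x)}:=\lambda\,\delta_{(\rho,\sqrt{\rho}u)(t,x)}+(1-\lambda)\,\delta_{(\tilde\rho,\sqrt{\tilde\rho}\tilde u)(t,x)}.
\]
Since every moment appearing in Definition~\ref{Def:MVS} (namely $\bar\rho$, $\overline{\rho u}$, $\overline{\rho u\otimes u}$ and $\overline{p(\rho)}$) equals for this $\nu$ the convex combination of the corresponding nonlinear expressions of the two weak solutions, and since the measure-valued identities and the entropy dissipation inequality are \emph{linear} in those moments, $\nu$ is an admissible measure-valued solution with datum $(\rho_{-T},u_{-T})$. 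On $(-T,0)$, $\nu$ collapses to the deterministic compression wave, and letting $\lambda$ vary in $(0,1)$ produces infinitely many such $\nu$.

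\emph{Rigidity argument.} Suppose for contradiction that $\nu$ is generated either by a sequence $(\rho_n,u_n)$ of weak solutions or by a vanishing viscosity sequence in the sense of Definitions~\ref{Def:MVS} and~\ref{Def:vanishingviscosity}, with $(\rho_n|u_n|^2)$ and $(\rho_n^2)$ equi-integrable on an open bounded set $B\subset A$. I would consider the lifted sequence $z_n:=Q(\rho_n,\sqrt{\rho_n}u_n)$ and verify the hypotheses of Theorem~\ref{Thm:rigidity} on $B$ with $\bar z_1:=z$, $\bar z_2:=\tilde z$ and $\mathcal{A}=\mathcal{A}_L$. The components of $z_n$ are built from $\rho_n$, $\rho_n u_n$, $\rho_n u_n\otimes u_n-\tfrac{1}{2}\rho_n|u_n|^2\mathbb{E}_2$ and $\rho_n^2+\tfrac{1}{2}\rho_n|u_n|^2$, so the assumed equi-integrability on $B$ together with Cauchy--Schwarz in the form $\rho_n|u_n|\le\sqrt{\rho_n}\sqrt{\rho_n|u_n|^2}$ yields equi-integrability and $L^1$-boundedness of $(z_n)$ on $B$. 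In the weak-solution case $\mathcal{A}_L z_n\equiv 0$ distributionally; in the vanishing-viscosity case the only obstruction is the viscous stress $\mu_n\operatorname{div}\mathbb{S}(\nabla u_n)$, and the Navier--Stokes energy inequality gives $\sqrt{\mu_n}\nabla u_n$ bounded in $L^2$, whence $\mu_n\operatorname{div}\mathbb{S}(\nabla u_n)\to 0$ in $W^{-1,2}(B)\hookrightarrow W^{-1,r}(B)$ for any $r\in(1,\tfrac{3}{2})$. Since $Q\colon\R^+\times\R^2\to\R^8$ is continuous and proper, $g\circ Q\in C_c(\R^+\times\R^2)$ for every $g\in C_c(\R^8)$, so the generation property of $(\rho_n,\sqrt{\rho_n}u_n)$ forces $(z_n)$ to generate the pushforward $\bar\nu=\lambda\delta_z+(1-\lambda)\delta_{\tilde z}$, with equi-integrability allowing the extension from $C_c$ to $C_0$ test functions. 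The support of $\bar\nu_{(t,x)}$ clearly lies in the segment between $z(t,x)$ and $\tilde z(t,x)$, and Proposition~\ref{Prop:Chiodaroli} provides $\tilde z-z\notin\Lambda_{\mathcal{A}_L}$ a.e.\ on $B\subset A$. Theorem~\ref{Thm:rigidity} then forces $\bar\nu_{(t,x)}$ to be a Dirac mass a.e.\ on $B$, contradicting the non-trivial split $\lambda\in(0,1)$ with $z\neq\tilde z$ on $B$.

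\emph{Main obstacle.} The hard conceptual work is already packaged in Proposition~\ref{Prop:Chiodaroli} (the non-wave-cone-connected compression wave construction) and in the rigidity Theorem~\ref{Thm:rigidity}. What remains is essentially bookkeeping: transferring Young measure generation through the nonlinear lift $Q$ in the merely $L^1$-equi-integrable setting (where properness of $Q$ is essential), and quantifying the smallness of the Navier--Stokes viscous remainder in a distributional norm compatible with Theorem~\ref{Thm:rigidity}. Both steps amount to standard, if careful, energy-type and approximation estimates.
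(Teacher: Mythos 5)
Your proposal follows essentially the same route as the paper: construct $\nu$ as a $\lambda$-convex combination of the two Diracs supported on the weak solutions from Proposition~\ref{Prop:Chiodaroli}, verify the measure-valued solution and admissibility conditions by linearity of the equations in the moments, lift a would-be generating sequence through $Q$ to an $\mathcal{A}_L$-free (or asymptotically $\mathcal{A}_L$-free) sequence on $B$, and invoke Theorem~\ref{Thm:rigidity} to produce a contradiction.

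Two small remarks on details. First, you explicitly invoke \emph{properness} of $Q$ to justify that $g\circ Q\in C_c(\R^+\times\R^2)$ for $g\in C_c(\R^8)$, and hence that the lifted sequence $z_n$ generates the pushforward Young measure. This is in fact the correct justification, and sharper than what the paper writes (which asserts $f\circ\tilde Q\in C_0$ merely ``since $\tilde Q$ is continuous''; continuity alone does not ensure the composition vanishes at infinity, properness does, and you verify it correctly via $\xi_1^\gamma+\tfrac{1}{2}|\xi'|^2\to\infty$). Second, in the vanishing-viscosity step you claim that the energy inequality gives $\sqrt{\mu_n}\nabla u_n$ bounded in $L^2$. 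This would require coercivity of $\mathbb{S}(\nabla u):\nabla u$ with respect to $\nabla u$, which can degenerate when $\lambda=-\eta$ in two dimensions; the paper instead uses the algebraic pointwise bound $|\mathbb{S}(\nabla u_n)|^2\le C\,\mathbb{S}(\nabla u_n):\nabla u_n$ and only controls $\sqrt{\mu_n}\,\mathbb{S}(\nabla u_n)$ in $L^2$, which is all that is needed to bound $\mu_n\operatorname{div}\mathbb{S}(\nabla u_n)$ in $H^{-1}(B)$. That substitution makes your estimate robust; the conclusion $\mathcal{A}_L z_n\to 0$ in $W^{-1,r}(B)$ for $r\in(1,\tfrac{3}{2})$ is then identical to the paper's. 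Otherwise the argument, including the Cauchy--Schwarz passage from equi-integrability of $(\rho_n^2)$ and $(\rho_n|u_n|^2)$ to equi-integrability of $(z_n)$, matches the paper.
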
	
	\begin{rem}
		Note that the sequences $\left(\rho_n|u_n|^2\right)$ and $\left(\rho_n^2\right)$ correspond to the integrands of the energy $E_{(\rho,u)}$.
	\end{rem}
	\begin{proof}
		By Proposition~\ref{Prop:Chiodaroli} there exists a Lipschitz initial datum $(\rho_{-T},u_{-T})\colon \R^2\to \R^+\times \R^2$ which gives rise to a pair of weak solutions $(\rho,u),(\tilde{\rho},\tilde{u})\colon (-T,\infty)\times \R^2\to\R^+\times \R^2$ with $\inf\rho>0$, $\inf \tilde{\rho}>0$ and $(\rho,u),(\rho,\tilde{u})\in L^{\infty}\left((-T,\infty)\times \R^2,\R^+\times \R^2\right)$. These solutions all coincide with a classical compression wave up to time $0$. Moreover, this proposition ensures that there is an open set $A\subset (0,\infty)\times\R^2$ such that for the corresponding lifted states $z=Q\left(\rho,\sqrt{\rho}u\right)$ and $\tilde{z}=Q\left(\tilde{\rho},\sqrt{\tilde{\rho}}\tilde{u}\right)$ holds $z(t,x)-\tilde{z}(t,x)\neq 0$ and $z(t,x)-\tilde{z}(t,x)\notin \Lambda_{\mathcal{A}_L}$ for\ a.e.\ $(t,x)\in A$. Note that $z,\tilde{z}\in L^{\infty}\left((-T,\infty)\times \R^2,\R^{8}\right)$. The map $Q$ is clearly continuous. Since $Q\left(\rho,\sqrt{\rho}u\right)\rightarrow 0$ as $\rho\rightarrow 0^+$ for all $v\in\R^2$, we can extend $Q$ by zero yielding a continuous function $\tilde{Q}\colon \R^3\to \R^8$.\\
		Now define the Young measures 
		\begin{align*}
			(t,x)&\mapsto\lambda\delta_{\left(\rho,\sqrt{\rho}u\right)(t,x)}+(1-\lambda)\delta_{\left(\tilde{\rho},\sqrt{\tilde{\rho}}\tilde{u}\right)(t,x)}=:\nu_{(t,x)},\\
			(t,x)&\mapsto\lambda\delta_{z(t,x)}+(1-\lambda)\delta_{\tilde{z}(t,x)}=:\tilde{\nu}_{(t,x)}
		\end{align*}
		for $\lambda\in (0,1)$ fixed but arbitrary. Note that for all $f\in C_0\left(\R^8,\R\right)$ and\ a.e.\ $(t,x)\in (-T,\infty)\times \R^2$ holds
		\begin{align*}
			\int_{\R^8}^{}f(z)\dd\tilde{\nu}_{(t,x)}(z)=\int_{\R^+\times\R^2} (f\circ Q)(\xi)\dd\nu_{(t,x)}(\xi).
		\end{align*}
		One immediately checks that $\nu$ is a measure-valued solution to (\ref{eq:CEpis2}).\\
		Assume now that $\nu$ is generated by a sequence $(\rho_n,u_n)\colon (-T,\infty)\times \R^2\to\R^+\times \R^2$ of weak solutions to (\ref{eq:CEpis2}) with the property that there exists an open and bounded set $B\subset A$ such that the sequences $\left(\rho_n|u_n|^2\right)$ and $\left(\rho_n^2\right)$ are equi-integrable on $B$. Then, $\left(Q\circ \left(\rho_n,\sqrt{\rho_n}u_n\right)\right)$ is $\mathcal{A}_L$-free on $(-T,\infty)\times \R^2$ in the sense of distributions. In particular, $\left(Q\circ \left(\rho_n,\sqrt{\rho_n}u_n\right)\big|_B\right)$ is $\mathcal{A}_L$-free in the weak sense also on the open and bounded set $B$.\\
		If $(\rho_n,u_n)$ is a vanishing viscosity sequence generating the measure-valued solution $\nu$ we have to be more careful:\\		
		Let $(\mu_n)\subset \R^+$ be the corresponding null sequence of viscosity parameters and let $(\rho_{n,-T},u_{n,-T})$ be the corresponding initial data as in Definition~\ref{Def:vanishingviscosity}. By our Definition~\ref{Def:vanishingviscosity} $(\rho_n,u_n)$ fulfills the energy inequality
		\begin{align*}
			&\int_{\R^2}^{}\frac{1}{2}\rho_n(t,x)|u_n(t,x)|^2+\frac{1}{2}\rho_n^2(t,x)\dx+\int_{-T}^{t}\int_{\R^2}^{}\mu_n\mathbb{S}(\nabla u_n):\nabla u_n\dxdt\\
			\leq &\int_{\R^2}^{}\frac{1}{2}\rho_{n,-T}(x)|u_{n,-T}(x)|^2+\frac{1}{2}\rho_{n,-T}^2\dx
		\end{align*}
		for\ a.e.\ $t\in (-T,\infty)$. The sequences $\left(\rho_{n,-T}\right)$ and $\left(\sqrt{\rho_{n,-T}}u_{n,-T} \right)$ converge weakly in $L^2$, thus the right hand side of the above inequality is uniformly bounded. Note that by Korn's inequality the term involving the viscosity stress tensor is non-negative.\\
		Since $(\rho_n,u_n)$ is a weak solution to (\ref{eq:CNSE}), its lift satisfies in particular
		\begin{align*}
			\mathcal{A}_L\left(Q\circ\left(\rho_n,\sqrt{\rho_n}u_n\right)\right)=\mu_n \operatorname{div}\mathbb{S}(\nabla u_n)\text{ in }\mathcal{D}'\left(B,\R^3\right).
		\end{align*}
		Hence, we estimate
		\begin{align*}
			\left\|\mathcal{A}_L\left(Q\circ\left(\rho_n,\sqrt{\rho_n}u_n\right)\right)\right\|_{H^{-1}(B,\R^3)}&=\underset{\underset{\varphi\in C_c^{\infty}\left(B,\R^3\right)}{\|\varphi\|_{H^1_0\left(B,\R^3\right)}\leq 1}}{\sup}|\langle \mu_n \operatorname{div}\mathbb{S}(\nabla u_n),\varphi\rangle|\\
			&=\underset{\underset{\varphi\in C_c^{\infty}\left(B,\R^3\right)}{\|\varphi\|_{H^1_0\left(B,\R^3\right)}\leq 1}}{\sup}\left|\int_{B}^{}\mu_n\mathbb{S}(\nabla u_n):\nabla \varphi\dxdt\right|\\
			&\leq \sqrt{\mu_n}\sqrt{\mu_n\int_{B}|\mathbb{S}(\nabla u_n)|^2\dxdt}\cdot 1\\
			&\leq C\sqrt{\mu_n}\sqrt{\mu_n\int_{B}\mathbb{S}(\nabla u_n):\nabla u_n\dxdt}\\
			&\leq C\sqrt{\mu_n}\rightarrow 0
		\end{align*}
		by Korn's inequality and the energy inequality. In particular, we obtain
		\begin{align*}
			\left\|\mathcal{A}_L\left(Q\circ\left(\rho_n,\sqrt{\rho_n}u_n\right)\right)\right\|_{W^{-1,r}\left(B,\R^3\right)}\leq \left\|\mathcal{A}_L\left(Q\circ\left(\rho_n,\sqrt{\rho_n}u_n\right)\right)\right\|_{H^{-1}\left(B,\R^3\right)}\rightarrow 0
		\end{align*}
		for all $r\in \left(1,\frac{3}{2}\right)$.\\		
		Now we treat both cases of $(\rho_n,u_n)$ being a vanishing viscosity sequence or a sequence of weak solutions at once:\\
		For all $f\in C_0\left(\R^8\right)$ we have $f\circ \tilde{Q}\in C_0\left(\R^+\times\R^2,\R\right)$, since $\tilde{Q}\in C\left(\R^+\times\R^2,\R^8\right)$. Thus, the sequence $\left(Q\circ \left(\rho_n,\sqrt{\rho_n}u_n\right)\right)$ generates the Young measure $\tilde{\nu}$ on $(-T,\infty)\times\R^2$. In particular, $\left(Q\circ \left(\rho_n,\sqrt{\rho_n}u_n\right)\big|_B\right)$ generates the Young measure $\tilde{\nu}\big|_B$.\\
		Moreover, the sequence $\left(Q\circ \left(\rho_n,\sqrt{\rho_n}u_n\right)\big|_B\right)$ is equi-integrable on the open and bounded set $B\subset A$, which of course implies its $L^1\left(B,\R^8\right)$-boundedness. Note also that
		\begin{align*}
			\operatorname{supp}(\tilde{\nu}_x)=\{z(x) \}\cup \{\tilde{z}(x) \}\subset \left\{\lambda z(x)+(1-\lambda)\tilde{z}(x)\,:\, \lambda\in[0,1] \right\}
		\end{align*}
		for\ a.e.\ $x\in B$. Therefore, Theorem~\ref{Thm:rigidity} implies that $\tilde{\nu}\big|_B=\delta_{w}$ for some $w\in L^1\left(B,\R^m\right)$. But this contradicts the fact that $\tilde{\nu}_b= \lambda \delta_{z(b)}+(1-\lambda)\delta_{\tilde{z}(b)}$ for\ a.e.\ $b\in B$ with $\lambda\in (0,1)$, which finishes the proof.\\
		The two weak solutions $(\rho, u)$ and $(\tilde{\rho},\tilde{u})$ from Proposition~\ref{Prop:Chiodaroli} are admissible and hence the admissibility is inherited by the measure-valued solution $\nu= \lambda\delta_{\left(\rho,\sqrt{\rho}u\right)}+(1-\lambda)\delta_{\left(\tilde{\rho},\sqrt{\tilde{\rho}}\tilde{u}\right)}$ by an elementary calculation.
	\end{proof}
	Our main theorem is now just an application of the previous results. This essentially follows from the fact that generating a generalized Young measure is a stronger notion than generating a classical Young measure. In particular, the former implies equi-integrability of the generating sequence, as we will see below.
	\begin{theo}\label{Thm:generalizedYM}
		Let $T>0$. There exists Lipschitz initial data $(\rho_{-T},u_{-T})\colon \R^2\to \R^+\times \R^2$ which gives rise to infinitely many admissible generalized measure-valued solutions to the two-dimensional isentropic Euler system (\ref{eq:CEpis2}). Such a generalized measure-valued solution coincides on $[-T,0)$ with a classical compression wave and evolves on $(0,\infty)$ such that it cannot be generated by sequences of finite energy weak solutions or by a vanishing viscosity sequence.
	\end{theo}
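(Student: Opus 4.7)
The plan is to take the admissible measure-valued solution $\nu$ from Theorem~\ref{Thm:classicalYM} and reinterpret it as a generalized measure-valued solution by appending trivial concentration data $(m, \nu^{\infty}) = (0, \text{dummy})$. Because $\nu = \lambda \delta_{(\rho, \sqrt{\rho}u)} + (1-\lambda)\delta_{(\tilde{\rho}, \sqrt{\tilde{\rho}}\tilde{u})}$ is supported on a bounded set (the weak solutions from Proposition~\ref{Prop:Chiodaroli} are in $L^{\infty}$), every $m$-weighted term in Definition~\ref{Def:gMVS} vanishes, so the generalized equations collapse to the classical ones already verified in Theorem~\ref{Thm:classicalYM}. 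Admissibility and the coincidence on $[-T,0)$ with the classical compression wave transfer directly. Letting $\lambda$ range in $(0,1)$, and varying the parameters $\eta, C_1, \tilde{C}_1$ in the construction of Proposition~\ref{Prop:Chiodaroli}, I obtain infinitely many such admissible generalized measure-valued solutions.

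For the non-generation claim, I would argue that Theorem~\ref{Thm:classicalYM} does the job once equi-integrability of the energy integrands on a bounded open $B \subset A$ is established. Let $(\rho_n, u_n)$ be either a sequence of finite energy weak solutions or a vanishing viscosity sequence generating $(\nu, 0, \nu^{\infty})$ in the generalized sense relative to $\mathcal{F}_{\gamma, 2}$. The key observation is that the two test functions $f_1(\lambda_1, \lambda_2) := \lambda_1^{\gamma}$ and $f_2(\lambda_1, \lambda_2) := |\lambda_2|^2$ lie in $\mathcal{F}_{\gamma, 2}$ with nontrivial recession functions $f_1^{\infty}(\beta_1, \beta_2) = \beta_1^{\gamma}$ and $f_2^{\infty}(\beta_1, \beta_2) = |\beta_2|^2$ (a direct computation using $p = \gamma, q = 2$, since $(s^q \beta_1)^{\gamma}/s^{pq} = \beta_1^{\gamma}$ and $|s^p \beta_2|^2/s^{pq} = |\beta_2|^2$). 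Since $m \equiv 0$, the generating condition forces the nonnegative sequences $\rho_n^{\gamma}$ and $\rho_n |u_n|^2 = |\sqrt{\rho_n} u_n|^2$ to converge weakly in $L^1$ on $B$; by the Dunford-Pettis theorem this is equivalent to equi-integrability on $B$.

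With equi-integrability in hand, Theorem~\ref{Thm:classicalYM} produces the contradiction in both cases; for the vanishing viscosity alternative, Remark~\ref{Rem:vanishingviscosity} supplies the finite energy bound needed for the Dunford-Pettis step to apply. The hard part will be rigorously passing from the vanishing of the concentration measure against a specific recession function to genuine $L^1$-equi-integrability of the corresponding nonlinear quantity; this uses both that the generating property of Definition~\ref{Def:gMVS} tests against $\varphi \in C_c(\bar{\Omega})$ and that $m \in \mathcal{M}^{+}([0,T] \times \bar{\Omega})$ is a measure on the closure, so that escape of mass to the boundary is also excluded. Once that technical step is in place, the proof reduces to a direct combination of the $\mathcal{A}$-free rigidity Theorem~\ref{Thm:rigidity}, the compression wave construction of Proposition~\ref{Prop:Chiodaroli}, and Theorem~\ref{Thm:classicalYM}.
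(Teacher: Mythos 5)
Your overall architecture matches the paper's: append a trivial concentration part $(0,\mu)$ to the admissible measure-valued solution from Theorem~\ref{Thm:classicalYM}, check that it is an admissible generalized measure-valued solution, and then show that any generating sequence (in the sense of generalized Young measures) must have equi-integrable energy integrands $\rho_n^{\gamma}$ and $\rho_n|u_n|^2$ on a bounded open $B\subset A$, so Theorem~\ref{Thm:classicalYM} yields the contradiction. The choice of the test functions $f_1(\lambda)=\lambda_1^{\gamma}$ and $f_2(\lambda)=|\lambda'|^2$ with the recession computation $f_1^{\infty}=\beta_1^{\gamma}$, $f_2^{\infty}=|\beta'|^2$ is exactly what the paper packages via the maps $\Phi_1,\Phi_2$ and the composition $f\circ(\operatorname{id}\times\Phi_i)$.

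The genuine gap is your step ``since $m\equiv 0$, the generating condition forces $\rho_n^{\gamma}$ and $\rho_n|u_n|^2$ to converge weakly in $L^1$ on $B$; by Dunford--Pettis this is equivalent to equi-integrability.'' Testing the generating condition against $\varphi\in C_c(\bar\Omega)$ and the single function $f_1$ only gives weak* convergence of the measures $\rho_n^{\gamma}\,\dxdt$ to the absolutely continuous limit $\langle\nu,f_1\rangle\,\dxdt$ in $\mathcal{M}(\bar\Omega)$. For nonnegative $L^1$-bounded sequences this does \emph{not} imply weak $L^1$ convergence: spike sequences whose mass spreads out to a Lebesgue-dominated weak* limit still fail equi-integrability (so Dunford--Pettis cannot be invoked -- you would need weak $L^1$ convergence as an input, which is precisely what you are trying to prove). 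What closes the gap is the characterization of equi-integrability by vanishing concentration for generalized Young measures: one must use the full generating condition against \emph{all} $f\in\mathcal{F}_{\gamma,2}$ -- in particular truncations $\min(\lambda_1^{\gamma},R)$, whose recession vanishes, to isolate the tail -- and this is exactly Theorem~2.9 of Alibert--Bouchitt\'e~\cite{AB} together with the $\mathcal{F}_1$ remarks in Section~2.4 of~\cite{KR}. The paper pushes the generalized Young measure forward along $\Phi_i$ to the scalar $\mathcal{F}_1$ setting precisely so that these references apply verbatim; your proposal acknowledges a ``hard part'' here but does not supply the missing argument, and the Dunford--Pettis route as stated does not close it.
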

	\begin{proof}
		Take the initial data $(\rho_{-T},u_{-T})$ and one of the admissible measure-valued solutions $\nu=\lambda \delta_{\left(\rho,\sqrt{\rho}u\right)}+(1-\lambda)\delta_{\left(\tilde{\rho},\sqrt{\tilde{\rho}}\tilde{u}\right)}$ as in Theorem~\ref{Thm:classicalYM}, where $(\rho,u),(\tilde{\rho},\tilde{u})$ are as in Proposition~\ref{Prop:Chiodaroli}. Write this as a generalized Young measure $(\nu,0,\mu)$, where $\mu$ is just a dummy variable completing our notation, because the concentration-measure $\nu^{\infty}$ is defined only $m$-a.e., which in the present case is the empty set. One immediately checks that $(\nu,0,\mu)$ is an admissible generalized measure-valued solution to (\ref{eq:CEpis2}).\\
		Now assume for the sake of contradiction that the generalized measure-valued solution $(\nu,0,\mu)$ is generated by $(\rho_n,u_n)$ which is a sequence  of weak solutions or a vanishing viscosity sequence. By our Definitions~\ref{Def:gMVS} and~\ref{Def:vanishingviscosity} this means that $\left(\rho_n,\sqrt{\rho_n}u_n\right)\in L^{2}\left((-T,\infty)\times\R^2,\R^+\right)\times L^2\left((-T,\infty)\times \R^2,\R^2\right)$ generates $(\nu,0,\mu)$ in the sense of generalized Young measures with respect to the function space $\mathcal{F}_{2,2}$. Define the maps
		\begin{align*}
			\Phi_1\colon \R^+\times \R^2\to\R,\ (\xi_1,\xi')&\mapsto \xi_1^2,\\
			\Phi_2\colon \R^+\times \R^2\to\R,\ (\xi_1,\xi')&\mapsto |\xi'|^2.
		\end{align*}
		For all $f\in \mathcal{F}_1$ we obtain that
		\begin{align*}
			(f\circ(\operatorname{id}\times \Phi_1))^{\infty}(x,\beta_1,\beta_2)&=\underset{\underset{\underset{s\rightarrow\infty}{(\beta_1',\beta_2')\rightarrow(\beta_1,\beta_2)}}{x'\rightarrow x}}{\lim}\frac{(f\circ(\operatorname{id}\times \Phi_1))\left(x',s^2\beta_1',s^2\beta_2'\right)}{s^{4}}\\
			&=\underset{\underset{\underset{s\rightarrow\infty}{(\beta_1',\beta_2')\rightarrow(\beta_1,\beta_2)}}{x'\rightarrow x}}{\lim}\frac{f\left(x',s^4(\beta_1')^2\right)}{s^{4}}\\
			&=\underset{\underset{\underset{\tilde{s}\rightarrow\infty}{\tilde{\beta}_1'\rightarrow\tilde{\beta}_1}}{x'\rightarrow x}}{\lim}\frac{f\left(x',\tilde{s}\tilde{\beta}_1\right)}{\tilde{s}}
		\end{align*}
		exists and is continuous on $\bar{\Omega}\times \left(\mathbb{S}^{2}_{2,2}\right)^+$. This means that $f\circ (\operatorname{id}\times\Phi_1)\in \mathcal{F}_{2,2}$. Similarly, one checks that $f\circ (\operatorname{id}\times\Phi_2)\in \mathcal{F}_{2,2}$. Now define the generalized Young measures $(\nu_1,0,\mu)$ and $(\nu_2,0,\mu)$ by
		\begin{align*}
			\langle \nu_i,f\rangle :=\langle \nu,f\circ(\operatorname{id}\times \Phi_i)\rangle \text{ for all }f\in\mathcal{F}_1\text{ and }i=1,2.
		\end{align*}
		Hence, the sequences $\left(\rho_n^2\right)\in L^1\left((-T,\infty)\times \R^2,\R^+\right)$ and $\left(\rho_n|u_n|^2\right)\in L^1\left((-T,\infty)\times \R^2,\R^+\right)$ generate the generalized Young measures $(\nu_1,0,\mu)$ and $(\nu_2,0,\mu)$, respectively.\\
		The remarks in Section~2.4 of~\cite{KR}, which characterize the function space $\mathcal{F}_1$, together with Theorem~2.9 in~\cite{AB} yield that a sequence $(z_n)\in L^1$ generating a generalized Young measure $(\nu,0,\mu)$ with respect to the function space $\mathcal{F}_1$ is equi-integrable. Therefore, the sequences $\left(\rho_n^2\right)$ and $\left(\rho_n|u_n|^2\right)$ are equi-integrable on $(-T,\infty)\times \R^2$. By Lemma~\ref{Lemma:generatingmeasures} the sequence $(\rho_n,u_n)$ generates the measure-valued solution $\nu$ in the classical sense. Thus, Theorem~\ref{Thm:classicalYM} yields the desired contradiction.
	\end{proof}
	\begin{rem}\label{Rem:rangeofgamma}
		Due to Remark~6.3 in~\cite{CDK}, the results in \cite{CDK} also hold for $p(\rho)=\rho^{\gamma}$ with $\gamma$ in some neighborhood of $2$. Observe also that (\ref{eq:weirdwaveconecond}) is valid for all $\gamma>1$ if we replace the right-hand side by $\frac{(\rho-\tilde{\rho})(\rho^{\gamma}-\tilde{\rho^{\gamma}})}{\rho\tilde{\rho}}$. So, our construction depends continuously on $\gamma$. Thus, also the conclusions of Theorem~\ref{Thm:generalizedYM} hold in a neighborhood of $2$.
	\end{rem}
	Theorem~\ref{Thm:generalizedYM} provides us with a concrete instance where the following non-generability result for certain generalized measure-valued solutions without concentration part can be applied. This result can be interpreted as a selection principle for unphysical solutions.
	\begin{cor}\label{Cor:selectioncriterion}
		Let $d\in \N$ and $\Omega\subset \R^d$ be open and bounded. Further, let $(\nu,0,\mu)$ be a generalized measure-valued solution to (\ref{eq:CEoriginal}) with pressure law $p(\rho)=\rho^{\gamma}$ for $\gamma>1$ on $[0,T]\times \Omega$ of the form $\nu=\lambda\delta_{\left(\rho,\sqrt{\rho}u\right)}+(1-\lambda)\delta_{\left(\tilde{\rho},\sqrt{\tilde{\rho}}\tilde{u}\right)}$ with $\lambda\in (0,1)$, where $(\rho,u)$ and $(\tilde{\rho},\tilde{u})$ are weak solutions (to possibly different initial data) whose lifts are not wave-cone-connected on a set of positive measure. Then $(\nu,0,\mu)$ cannot be generated by sequences of finite energy weak solutions or by a vanishing viscosity sequence.
	\end{cor}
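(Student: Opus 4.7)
The strategy is to argue by contradiction, following the blueprint of the proof of Theorem~\ref{Thm:generalizedYM} essentially verbatim. Suppose $(\nu,0,\mu)$ is generated by a sequence $(\rho_n,u_n)$ which is either a sequence of finite energy weak solutions or a vanishing viscosity sequence (the latter notion being understood in the natural $d$-dimensional extension of Definition~\ref{Def:vanishingviscosity}). By Definitions~\ref{Def:gMVS} and~\ref{Def:vanishingviscosity} we then have $(\rho_n,\sqrt{\rho_n}u_n) \in L^\gamma([0,T]\times\Omega,\R^+) \times L^2([0,T]\times\Omega,\R^d)$ generating $(\nu,0,\mu)$ with respect to $\mathcal{F}_{\gamma,2}$.

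The first step is to extract equi-integrability of the energy densities by the same squaring trick used in Theorem~\ref{Thm:generalizedYM}, now with the obvious adaptation to general $\gamma$. For $\Phi_1(\xi_1,\xi') := \xi_1^\gamma$ and $\Phi_2(\xi_1,\xi') := |\xi'|^2$, one computes that the $\gamma$-$2$-recession of $f \circ (\mathrm{id}\times \Phi_i)$ reduces to the $1$-recession of $f$ evaluated at the appropriate power, so $f\circ(\mathrm{id}\times\Phi_i) \in \mathcal{F}_{\gamma,2}$ whenever $f\in \mathcal{F}_1$. Consequently $(\rho_n^\gamma)$ and $(\rho_n|u_n|^2)$ generate generalized Young measures with vanishing concentration part, and the characterization of $\mathcal{F}_1$ in~\cite{KR} together with Theorem~2.9 in~\cite{AB} yields that both sequences are equi-integrable. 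Young's inequality then propagates equi-integrability to every component of the lift $Q_\gamma(\rho_n,\sqrt{\rho_n}u_n)$, and Lemma~\ref{Lemma:generatingmeasures} shows that $(\rho_n,\sqrt{\rho_n}u_n)$ generates $\nu$ as a classical Young measure as well.

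Using the continuous extension $\tilde Q_\gamma$ of the lift constructed in the proof of Theorem~\ref{Thm:classicalYM}, the sequence $(Q_\gamma(\rho_n,\sqrt{\rho_n}u_n))$ then generates the lifted classical Young measure $\tilde\nu = \lambda\delta_z + (1-\lambda)\delta_{\tilde z}$ on $[0,T]\times\Omega$, where $z$ and $\tilde z$ denote the lifts of $(\rho,u)$ and $(\tilde\rho,\tilde u)$. In the weak solution case this lifted sequence is exactly $\mathcal{A}_L$-free in the distributional sense; in the viscosity case, the uniform energy bound together with Korn's inequality yields $\mathcal{A}_L(Q_\gamma(\rho_n,\sqrt{\rho_n}u_n)) \to 0$ in $W^{-1,r}$ for $r \in (1,d/(d-1))$ by the identical computation as in Theorem~\ref{Thm:generalizedYM}. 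Let $A\subset[0,T]\times\Omega$ be the positive-measure set on which $z - \tilde z \notin \Lambda_{\mathcal{A}_L}$, and fix any open bounded $B\subset A$. All hypotheses of Theorem~\ref{Thm:rigidity} are then met on $B$ (noting that $\mathrm{supp}(\tilde\nu_{(t,x)}) = \{z(t,x), \tilde z(t,x)\}$ trivially lies in the segment between the two lifted states), so it forces $\tilde\nu|_B = \delta_w$ for some $w\in L^1(B,\R^m)$. Since $\lambda\in(0,1)$ and $z\neq\tilde z$ on $B$ by Lemma~\ref{lemm:charwavecone}, this contradicts the two-atom form of $\tilde\nu$ and proves the claim.

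The main subtle step is the equi-integrability extraction, since the rigidity Theorem~\ref{Thm:rigidity} cannot be applied without it; here the vanishing of the concentration measure $m = 0$ in the hypothesis on $(\nu,0,\mu)$ is decisive, and the $\mathcal{F}_1$-characterization from~\cite{KR,AB} does the heavy lifting. The remainder of the argument is a template instantiation of Theorems~\ref{Thm:classicalYM} and~\ref{Thm:generalizedYM}: the concrete pair produced by Proposition~\ref{Prop:Chiodaroli} is replaced by the abstract pair supplied by the hypothesis, and one verifies once and for all that the general $d$-dimensional linearized Euler operator $\mathcal{A}_L$ still has constant rank and satisfies $k \geq d$, so that both Theorem~\ref{Thm:rigidity} and the characterization of its wave cone via Lemma~\ref{lemm:charwavecone} remain applicable.
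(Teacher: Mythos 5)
Your proposal is correct and takes essentially the same approach as the paper: the paper's proof is only a one-line cross-reference to Lemma~\ref{Lemma:generatingmeasures}, Theorem~\ref{Thm:rigidity}, and the proofs of Theorems~\ref{Thm:classicalYM} and~\ref{Thm:generalizedYM}, and your reconstruction faithfully unpacks exactly that chain of arguments. You also make explicit the small verifications the paper leaves implicit — the recession computation for general $\gamma$, the propagation of equi-integrability to the lift via Young's inequality, and the need to check the constant rank property and $k\geq d$ for the $d$-dimensional linearized Euler operator $\mathcal{A}_L$.
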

	\begin{proof}
		This is a direct consequence of Lemma~\ref{Lemma:generatingmeasures}, Theorem~\ref{Thm:rigidity}, and the proofs of Theorems~\ref{Thm:classicalYM} and~\ref{Thm:generalizedYM}. 
	\end{proof}

	\section*{Acknowledgements}
	The authors would like to thank Jack Skipper for many insightful discussions.

\end{document}